\renewcommand{\autoref}{\cref}
\newtheorem{thm}{Theorem}[section]
\newtheorem{cor}[thm]{Corollary}
\newcommand{\R}{\mathbb{R}}
\newcommand{\N}{\mathbb{N}}
\newcommand{\bit}{\begin{itemize}}
\newcommand{\eit}{\end{itemize}}
\providecommand{\U}[1]{\protect\rule{.1in}{.1in}}
\numberwithin{equation}{section}
\newtheorem{theorem}{Theorem}[section]
\newtheorem{corollary}[theorem]{Corollary}
\newtheorem{definition}[theorem]{Definition}
\newtheorem{lemma}[theorem]{Lemma}
\newtheorem{proposition}[theorem]{Proposition}
\newtheorem{remark}[theorem]{Remark}
\newtheorem{example}[theorem]{Example}
\DeclareMathOperator{\ReLU}{ReLU}
\newtheorem{defn}[theorem]{Definition}
\newtheorem{prop}[theorem]{Proposition}
\newcommand{\sgn}{\mathop{\rm sgn}\nolimits}
\newcommand{\bpr}{\begin{prop}}
\newcommand{\beq}{\begin{equation}}
\newcommand{\bd}{\begin{defn}}
\newcommand{\bex}{\begin{example}}
\newcommand{\bc}{\begin{cor}}
\newcommand{\bl}{\begin{lemma}}
\newcommand{\bt}{\begin{theorem}}
\newcommand{\br}{\begin{remark}}
\newcommand{\epr}{\end{prop}}
\newcommand{\eeq}{\end{equation}}
\newcommand{\ed}{\end{defn}}
\newcommand{\eex}{\end{example}}
\newcommand{\ec}{\end{cor}}
\newcommand{\el}{\end{lemma}}
\newcommand{\et}{\end{theorem}}
\newcommand{\er}{\end{remark}}
\newcommand{\Z}{ \mathbb{Z} }
\newcommand{\T}{ \mathbb{T} }
\newcommand{\CalC}{\mathcal C}
\newcommand{\CalS}{\mathcal S}
\begin{document}

\title{Bases of Lebesgue spaces formed by neural networks}
\author{
Vladimir Kulbatov\footnote{Department of Mathematics, Faculty of Nuclear Sciences and Physical Engineering, Czech Technical University in Prague, Trojanova 13, 12000 Praha, Czech Republic.
Email: \href{kulbavla@fjfi.cvut.cz}{kulbavla@fjfi.cvut.cz}},
Jan Lang\footnote{Department of Mathematics, The Ohio State University, 100 Math Tower, 231 West 18th Avenue, Columbus, OH 43210-1174, United States; and Department of Mathematics, Faculty of Electrical Engineering, Czech Technical University in Prague, Technick\'a~2, 166~27 Praha~6, Czech Republic.
Email: \href{lang.162@osu.edu}{lang.162@osu.edu}},
Cornelia Schneider\footnote{Friedrich-Alexander-Universit\"at Erlangen-N\"urnberg, Angewandte Mathematik III, Cauerstr. 11, 91058 Erlangen, Germany.
Email: \href{mailto:schneider@math.fau.de}{schneider@math.fau.de}}, 
and Jan Vyb\'iral\footnote{Department of Mathematics, Faculty of Nuclear Sciences and Physical Engineering, Czech Technical University in Prague, Trojanova 13, 12000 Praha, Czech Republic.
Email: \href{jan.vybiral@fjfi.cvut.cz}{jan.vybiral@fjfi.cvut.cz}.
The work of this author has been supported by the grant P202/23/04720S of the Grant Agency of the Czech Republic. J.V. is a member of the Nečas center for mathematical modeling.}
}
\maketitle

\vskip1cm

\centerline{\textit{Dedicated to Hans Triebel, on the occasion of his 90th birthday.}}

\vskip1.5cm

\begin{abstract}
The seminal work of Daubechies, DeVore, Foucart, Hanin, and Petrova introduced in 2022 a sequence of univariate piece-wise linear functions,
which resemble the classical Fourier basis and which, at the same time, can be easily reproduced by artificial neural networks
with $\ReLU$ activation function. We give an alternative way how to calculate the inner products of functions from this system
and discuss the spectral properties of the Gram matrix generated by this system.
The univariate system was later generalized to the multivariate setting by two of the authors of this work.
Instead of the usual tensor product construction, this generalization relied on the inner products inside of the argument
of the univariate sequence. It turned out that such a system forms a Riesz basis of $L_2(0,1)^n$ for every $n\ge 1$
with Riesz constants independent of $n$.

In this work, we investigate the properties of these new sequences of functions in $L_q(0,1)^n$ for $q\not =2.$
First, we show that the univariate system is a Schauder basis in $L_q(0,1)$ for every $1<q<\infty$. By a general argument, it follows
that the tensor products of this system also form  a Schauder basis in $L_q(0,1)^n$ for every $n\ge 2$ and $1<q<\infty.$
The same fact can also be  shown by measuring the distance of the tensor product system to the classical multivariate Fourier basis,
but - surprisingly - this argument only works for $n\le 3$.
If, on the other hand, we replace the outer tensor products by inner products directly in the argument of the univariate system, the same approach
is applicable for an arbitrary dimension $n\in\N.$\\

\noindent{\em Key Words: } Riesz basis, Schauder basis, Artificial Neural Networks (ANN), Tensor product\\
{\em MSC2020 Math Subject Classifications: Primary 42C15; Secondary 46B15, 68T07} 
\end{abstract}

\section{Introduction}

In their groundbreaking paper \cite{DDFHP}, Daubechies, DeVore, Foucart, Hanin, and Petrova introduced a sequence of univariate  piece-wise linear functions,
which, on one hand, can be easily reproduced by (deep) ReLU neural networks and, on the other hand, resemble the standard Fourier basis.
The topic was further developed in \cite{SV}, where a multivariate analogue of this basis was investigated.

To recall the known results, we first define two auxiliary functions
\[
\CalC(x)=4\left|x-\frac{1}{2}\right|-1\quad\text{and}\quad \CalS(x)=\left|2-4\left|x-\frac{1}{4}\right|\right|-1,\quad x\in [0,1].
\]
These functions are extended to $x\in\R$ periodically, i.e., ${\CalC}(x)=\CalC(x-\lfloor x\rfloor)$ and $\CalS(x)=\CalS(x-\lfloor x\rfloor)$.
Finally, we consider integer dilates of these functions formally defined as $\CalC_k(x):=\CalC(kx)$ and $\CalS_k(x):=\CalS(kx)$ for $k\in\N$ and $x\in\R.$

\begin{figure}[h]
\begin{minipage}{0.5\textwidth}
\includegraphics[width=10cm]{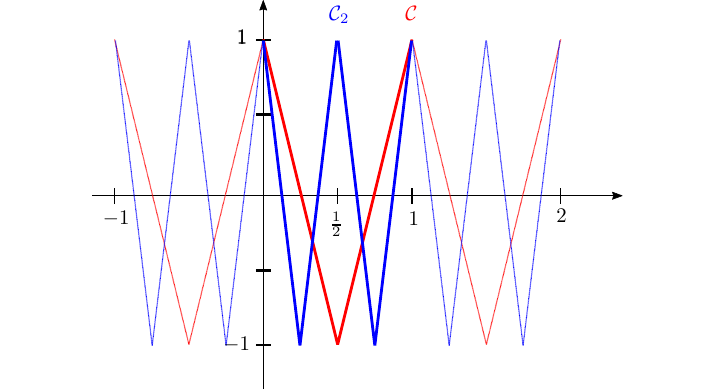}
\end{minipage}\hfill 
\begin{minipage}{0.5\textwidth}
\includegraphics[width=10cm]{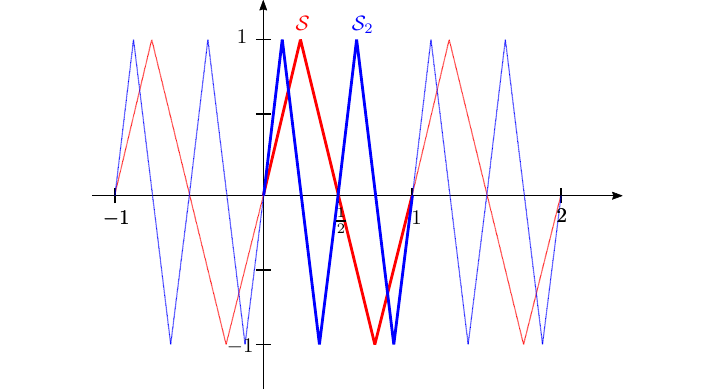}
\end{minipage}
\caption{The plot of $\CalC$, $\CalS$, $\CalC_2$ and $\CalS_2$.}
\label{fig:1}
\end{figure}

It was shown in \cite{DDFHP} (with an alternative proof given in \cite{SV}) that
\begin{equation}\label{eq:R1}
{\mathcal R}_1:=\{1\}\cup\{\sqrt{3}\,\CalC_k,\sqrt{3}\,\CalS_k: k\in \N\} 
\end{equation}
forms a so-called Riesz basis of $L_2(0,1)$. Recall, that a sequence $(\varphi_j)_{j=1}^\infty\subset L_2(0,1)$
is called a Riesz sequence, if there are two positive and finite constants $0<A\le B<\infty$ such that
\begin{equation}\label{eq:Riesz1}
A\sum_{j=1}^\infty \alpha_j^2\le \left\|\sum_{j=1}^\infty \alpha_j\varphi_j\right\|^2\le B\sum_{j=1}^\infty \alpha_j^2
\end{equation}
holds for every square-summable sequence $(\alpha_j)_{j=1}^\infty$.
In such a case, the constants $A$ and $B$ are usually called Riesz constants of $(\varphi_j)_{j=1}^\infty$.
Furthermore, if the closed linear span of $(\varphi_j)_{j=1}^\infty$ is the whole $L_2(0,1)$, then we call it a Riesz basis.
With this notation, it was shown in \cite{DDFHP} that the Riesz constants of \eqref{eq:R1} can be chosen as $A=\frac{1}{2}$ and $B=\frac{3}{2}.$
The main aim of this work is to complement our knowledge regarding the properties of \eqref{eq:R1}.

First, our proof that \eqref{eq:R1} forms a Riesz basis of $L_2(0,1)$ was based on the properties of the Gram matrices $(\langle \CalC_k,\CalC_l\rangle)_{k,l\in\N}$
and $(\langle \CalS_k,\CalS_l\rangle)_{k,l\in\N}$ and the values of the inner products $\langle \CalC_k,\CalC_l\rangle$
and $\langle \CalS_k,\CalS_l\rangle$ were calculated by expanding $\CalC_k$ and $\CalC_l$ (or $\CalS_k$ and $\CalS_l$) into their Fourier series.
Given the simplicity of $\CalC_k$ and $\CalS_l$, one might wonder if there is not a direct way to compute these inner products.
In Section \ref{sec:inner} we provide such a calculation, which avoids any use of Fourier analysis (but is based on the Chinese remainder theorem instead).

Our next contribution to \cite{SV} is the proof that the Gram matrices $(\langle \CalC_k,\CalC_l\rangle)_{k,l\in\N}$
and $(\langle \CalS_k,\CalS_l\rangle)_{k,l\in\N}$ have the same spectra, although their entries differ sometimes in sign.
We observed this effect already numerically  when working  on \cite{SV} and Section \ref{sec:spectra}
presents a short proof of this fact.

The next aim of our present work is to investigate the properties of \eqref{eq:R1} outside of the Hilbert space setting.
To be more specific, we consider Lebesgue spaces $L_q(0,1)$ for $1<q<\infty.$
In that case, there is a number of different ways how to define a basis and the subject was studied extensively in functional analysis \cite{AlbiacKalton,Fabian,LT,Sing}.
Our focus lies on the notion of a Schauder basis, which we recall for the reader's convenience.
\begin{definition}
If $X$ is a Banach space, then the sequence $(\varphi_j)_{j=1}^\infty\subset X$ is a Schauder basis of $X$ if for every element $x\in X$
there is a unique sequence of scalars $(\alpha_j)_{j=1}^\infty$ such that
\begin{equation}\label{eq:def_Schauder}
x=\sum_{j=1}^\infty \alpha_j \varphi_j.
\end{equation}
\end{definition}
Note, that the series in \eqref{eq:def_Schauder} does not need to converge unconditionally and that the ordering of the basis elements $(\varphi_j)_{j=1}^\infty$
may be crucial. This effect was studied in detail in harmonic analysis in connection with the convergence of multivariate Fourier series, cf. \cite{Feff1, Feff2}.
We return to this issue later on in Section \ref{Sec:3}, where we study Schauder bases of spaces of multivariate functions.

It is a classical result from harmonic analysis (cf. \cite[Example II.B.11]{Woj}) that the complex exponential functions
$(e^{2\pi ikx})_{k\in\Z}$ form a Schauder basis of $L_q(0,1)$ for every $1<q<\infty.$
To deal with function spaces of real-valued functions and to simplify the notation, we use a variant of this result, namely that the system $(\sqrt{2}\sin(j\pi t))_{j\in\N}$
is a Schauder basis of the real $L_q(0,1)$ space.
The piece-wise linear approximation of this system is defined by setting $S(t):=\CalS(t/2)$ for $t\in\R$, 
where $\CalS(x)=\CalS_1(x)$ is one of the functions from the Riesz basis \eqref{eq:R1} above, and $S_j(t)=S(jt)$ for $j\in \N$ . 
Clearly, the family $(S_j)_{j\in \N}$ is again easily reproducible by artificial neural networks, and Theorem \ref{Theorem 8.0.7.6} shows that it also constitutes
a Schauder basis in $L_q(0,1)$. The argument used to prove Theorem \ref{Theorem 8.0.7.6} is rather standard and commonly used in the context of Schauder bases.
Namely, we show that there is a bounded linear operator $T$, mapping $L_q(0,1)$ onto itself, which has a bounded inverse and which maps the sequence
$(\sqrt{2}\sin(j\pi t))_{j\in\N}$ onto $(S_j)_{j\in \N}$. This allows to transfer the property of being a Schauder basis from one of these systems to the other.

Section \ref{Sec:3} is devoted to spaces of multivariate functions, defined on $(0,1)^n.$ Let us first comment on the Hilbert space setting $q=2$ and the known results.
The usual and most natural way how to create an orthonormal basis of $L_2(0,1)^n$ -- once we have an orthonormal basis of $L_2(0,1)$ -- is to consider the tensor products
of this univariate basis. This idea can  easily be translated to Riesz bases but, unfortunately, this approach comes with  certain drawbacks. First,
the Riesz constants of the multivariate basis are the products of the Riesz constants of the univariate basis. Given the results of \cite{DDFHP}, this would lead to an 
exponential dependence of the Riesz constants on the underlying dimension $n$. Another obstacle is that  $\ReLU$ artificial neural networks 
cannot reproduce the multiplication function $f:(x,y)\to x\cdot y$ exactly, cf. \cite{EPGB,Telgarsky}, and, therefore, the tensor products of the elements of \eqref{eq:R1} are not easily
reproducible by $\ReLU$ artificial neural networks.

A more viable direction was pursued in \cite{SV}, where the multivariate analogue of \eqref{eq:R1} was defined using inner products as follows
\begin{equation}\label{eq:CS_Rn_intro}
 {\mathcal R}_n:=\{1\}\cup\{\sqrt{3}\, \CalC_{\mathbf{k}},\sqrt{3}\,\CalS_{\mathbf{k}}: \mathbf{k}\in\Z^n, \mathbf{k}+\!\!\!>0\}.
\end{equation}
Here, $\mathbf{k}+\!\!\!>0$ for $\mathbf{k}\in\Z^n\setminus\{0\}$ means that the first non-zero entry of $\mathbf{k}$ is non-negative
and $\CalC_{\mathbf{k}}(x):=\CalC(\mathbf{k}\cdot x)$ and $\CalS_{\mathbf{k}}(x):=\CalS(\mathbf{k}\cdot x)$.
Using this definition, it was shown in \cite{SV} that $\mathcal{R}_n$ is a Riesz basis of $L_2(0,1)^n$ with Riesz constants $A=1/2$ and $B=3/2$
independent of $n$. In this sense, taking the inner products of $\mathbf{k}$ and $x$ inside of the argument of $\CalC$ and $\CalS$ is clearly superior
to taking simply the tensor products of the elements of \eqref{eq:R1}. The potential of \eqref{eq:CS_Rn_intro} in approximation
of functions from Sobolev and Barron spaces was then investigated in \cite{SUV}.

Section \ref{Sec:3} investigates the behavior of the different sequences of multivariate functions outside of the Hilbert space setting,
i.e., as subsets of $L_q(0,1)^n$ for $1<q<\infty.$ First, we use the fact that the tensor products of functions from $(\sqrt{2}\sin(j\pi t))_{j\in \N}$
form a Schauder basis of $L_q(0,1)^n$ and apply the standard proof method to show in Theorem \ref{thm:123} that the same holds also for tensor products
of the elements of $(S_j)_{j\in\N}$.
Interestingly, the distance between the corresponding multivariate bases grows with the dimension and the proof method of Theorem \ref{thm:123}
only works for $n\le 3.$ In Section \ref{sec:tensor}, we apply the abstract theory of tensor products of Banach spaces
to show, that this restriction on $n$ is mainly the artifact of the proof method used, and that the same result holds also for $n>3.$

In the next step, we compare this result to the construction involving inner products. To be more precise, in Theorem \ref{thm:sincos_inner}
we show that
\begin{equation*}
\{1\}\cup\{\sqrt{2}\,\cos(2\pi \mathbf{k}\cdot x):\mathbf{k}\in\Z^n, \mathbf{k}+\!\!\!>0\}\cup\{\sqrt{2}\,\sin(2\pi \mathbf{k}\cdot x):\mathbf{k}\in\Z^n,\mathbf{k}+\!\!\!>0\}
\end{equation*}
is an orthonormal basis of $L_2(0,1)^n$. Next we show in Proposition \ref{prop:sincos} that it also constitutes a Schauder basis of $L_q(0,1)^n$
for all $n\in\N$ and $1<q<\infty.$ Finally, the proof method of Theorem \ref{Theorem 8.0.7.6} then allows us (cf. Theorem \ref{thm:Rn}) to conclude that $\mathcal{R}_n$
is a Schauder basis in $L_q(0,1)^n.$ Note that in this case this approach is independent of the dimension $n$. 

Our final remark concerns again the setting of Hilbert spaces. Without much additional effort, the proof method, which we use
to deal with Schauder bases, can also be applied to Riesz bases. When doing so, we not only re-prove the results
of \cite{DDFHP} and \cite{SV} for ${\mathcal R}_1$ and ${\mathcal R}_n$, but we also improve the lower Riesz constant
$A$ to $A=0.5787\dots>1/2$ in both, the univariate and the multivariate setting, cf. Theorem \ref{thm:Riesz_1} and Theorem \ref{thm:Riesz_n}, respectively.

\section{Univariate bases}

The aim of this section is twofold. First, we revisit some properties of the system \eqref{eq:R1}, which were already discussed in \cite{SV},
but we provide alternative proofs, which shed new light on this system and, possibly, might also allow to extend these properties to other systems in the future.
The second aim of this section is to investigate the properties of \eqref{eq:R1} outside of the Hilbert space setting, i.e., to study the properties
of \eqref{eq:R1} as a subset of $L_q(0,1)$ for $q\not=2.$

\subsection{Inner products of functions from ${\mathcal R}_1$}\label{sec:inner}

The inner products of functions from ${\mathcal R}_1$ were calculated in \cite[Lemma 2.1]{SV} and they played a crucial role
in the estimates of the Riesz constants of ${\mathcal R}_1$. The original proof method, inspired by \cite{DDFHP}, was based on the Fourier decomposition
of the elements of \eqref{eq:R1}. As the functions from \eqref{eq:R1} are piece-wise linear, it seems quite intuitive that there should also be a simpler
and more direct way to compute these inner products. In this section, we indeed present such a way, based on direct calculations and the Chinese remainder theorem \cite[Theorem 121]{HW}.

For the reader's convenience, we restate the result, which we are going to reprove.

\begin{lemma}[{\cite[Lemma 2.1]{SV}}]\label{Lem21}
Let $j,k\in\N.$ Then
\begin{enumerate}
    \item $\langle \CalC_j,\CalS_k\rangle=0$;
    \item $\langle \CalC_j,\CalC_k\rangle=\langle\CalS_j,\CalS_k\rangle=0$ if the prime factorizations of $j$ and $k$ contain a different power of 2;
    \item If the prime factorizations of $j$ and $k$ contain the same power of two, then
    \[
    3\,\langle \CalC_j,\CalC_k\rangle = 3\,|\langle \CalS_j,\CalS_k\rangle|=\frac{\gcd(j,k)^4}{j^2\cdot k^2}.
    \]
    The sign of $\langle \CalS_j,\CalS_k\rangle$ is negative if, and only if, $(j+k)/(2\gcd(j,k))$ is even.
\end{enumerate}
\end{lemma}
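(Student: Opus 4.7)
My plan rests on three elementary symmetry properties of $\CalC$ and $\CalS$: (i) the evenness/oddness $\CalC(-y) = \CalC(y)$ and $\CalS(-y) = -\CalS(y)$; (ii) the half-period antisymmetry $\CalC(z + 1/2) = -\CalC(z)$ and $\CalS(z + 1/2) = -\CalS(z)$; and (iii) the quarter-shift identity $\CalS(z) = \CalC(z - 1/4)$. All three are routine consequences of the explicit piecewise-linear formulas and the $1$-periodicity. Item~(1) then follows immediately from (i): the substitution $x \mapsto 1 - x$ sends $\CalC(jx)\CalS(kx)$ to $-\CalC(jx)\CalS(kx)$, so the integral equals its own negative.

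For items~(2) and~(3) the first step is to reduce to the coprime case. Setting $d = \gcd(j,k)$, $j = d\alpha$, $k = d\beta$ with $\gcd(\alpha,\beta) = 1$, the substitution $x = y/d$ and periodicity collapse $\langle \CalC_j,\CalC_k\rangle$ to $\int_0^1 \CalC(\alpha y)\CalC(\beta y)\,dy$ (and analogously for $\CalS$). I would then apply the Chinese remainder theorem as follows: substitute $y = w/(\alpha\beta)$, split $[0,\alpha\beta]$ into its $\alpha\beta$ unit subintervals and translate $w = t + l$ on the $l$-th one; by $1$-periodicity the two factors become $\CalC((t + (l \bmod \beta))/\beta)$ and $\CalC((t + (l \bmod \alpha))/\alpha)$. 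Since $\gcd(\alpha,\beta) = 1$, the CRT map $l \mapsto (l \bmod \beta,\, l \bmod \alpha)$ is a bijection onto $\{0,\dots,\beta-1\} \times \{0,\dots,\alpha-1\}$, and the double sum factorises into
$$\int_0^1 \CalC(\alpha y)\CalC(\beta y)\,dy \;=\; \frac{1}{\alpha\beta}\int_0^1 A_\alpha(t)\,A_\beta(t)\,dt, \qquad A_m(t) := \sum_{r=0}^{m-1}\CalC\!\left(\frac{t+r}{m}\right),$$
with an analogous identity for $\CalS$ involving $B_m(t) := \sum_{r=0}^{m-1}\CalS((t+r)/m)$.

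The heart of the argument is the closed-form evaluation
$$A_m(t) = \begin{cases} 0, & m \text{ even},\\ \CalC(t)/m, & m \text{ odd},\end{cases}\qquad B_m(t) = \begin{cases} 0, & m \text{ even},\\ (-1)^{(m-1)/2}\CalS(t)/m, & m \text{ odd}.\end{cases}$$
The even cases follow by pairing $r$ with $r + m/2$ and invoking~(ii). The odd case for $A_m$ is where the real work sits and is the main obstacle. Both $A_m$ and $\CalC/m$ are continuous and $1$-periodic; I would first observe that on $[0,1]$ both are piecewise linear with corners only among $\{0, 1/2, 1\}$. For $A_m$ this is a short case analysis: the corners of $\CalC$ lie in $(1/2)\Z$, so a corner of $A_m$ at $t$ requires $(t+r)/m \in (1/2)\Z$ for some $r \in \{0,\dots,m-1\}$, and for $m$ odd this forces $t \in \{0, 1/2, 1\}$. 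The identity then reduces to the two elementary sums $A_m(0) = 1/m$ and $A_m(1/2) = -1/m$, which follow from $\sum_{r=0}^{m-1}|2r - m| = (m^2+1)/2$ and $\sum_{r=0}^{m-1}|2r + 1 - m| = (m^2-1)/2$, respectively. The $B_m$ formula then drops out of $B_m(t) = A_m(t - m/4)$ (a direct consequence of (iii)) combined with $(m-1)/2$ applications of~(ii) to $\CalS$.

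With these identities the lemma closes quickly. In the coprime reduction $\alpha,\beta$ cannot both be even, so if exactly one of them is even the corresponding $A_m$ (or $B_m$) vanishes and the inner product is zero; translating back, evenness of $\alpha$ or $\beta$ corresponds precisely to a mismatch between the $2$-adic valuations of $j$ and $k$, which gives item~(2). If both $\alpha,\beta$ are odd, then
$$\int_0^1 \CalC(\alpha y)\CalC(\beta y)\,dy \;=\; \frac{1}{\alpha^2\beta^2}\int_0^1 \CalC(t)^2\,dt \;=\; \frac{1}{3\alpha^2\beta^2} \;=\; \frac{\gcd(j,k)^4}{3j^2k^2},$$
using $\int_0^1 \CalC^2 = 1/3$ from direct integration. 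The same computation for $\CalS$ produces the extra sign $(-1)^{(\alpha-1)/2 + (\beta-1)/2} = (-1)^{(\alpha+\beta)/2 - 1}$, which is negative precisely when $(\alpha+\beta)/2 = (j+k)/(2\gcd(j,k))$ is even, matching the stated sign rule and completing item~(3).
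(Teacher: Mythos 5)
Your proposal is correct and follows essentially the same route as the paper: reduction to the coprime case, the Chinese remainder theorem to factor the integral into a product of the lattice sums $\sum_{r}\CalC((t+r)/m)$, and the quarter-shift relation between $\CalC$ and $\CalS$ to transfer the cosine computation to the sine case. The only cosmetic differences are that you evaluate $A_m$ for odd $m$ by a corner-location argument (checking values at $t=0,1/2$) where the paper sums the linear pieces directly, and that you package the sign bookkeeping into the single closed form $B_m(t)=(-1)^{(m-1)/2}\CalS(t)/m$ instead of the paper's case analysis modulo $4$.
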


\begin{proof}
First, we show that it is enough to prove the lemma for $j$ and $k$ co-prime, i.e., if $\gcd(j,k)=1.$ Indeed, assume that the lemma
was already proven for co-prime indices and assume that $j$ and $k$ are arbitrary positive integers. Then we write
$j=\ell\cdot J$ and $k=\ell\cdot K$, where $\ell=\gcd(j,k)$ and $J$ and $K$ are co-prime. Using periodicity, we get
\begin{align*}
 \langle \CalC_j,\CalC_k\rangle&=\int_0^1 \CalC(jx)\CalC(kx)dx=\int_0^1 \CalC(\ell J x)\CalC(\ell Kx)dx\\
 &=\int_0^\ell {\CalC}(Jy)\CalC(Ky)\frac{dy}{\ell}=\int_0^1 {\CalC}(Jy)\CalC(Ky) dy=\langle \CalC_J,\CalC_K\rangle.
\end{align*}
If $j$ and $k$ contain a different power of two in their prime factorization, then so do $J$ and $K$ and the last expression is equal to zero.
On the other hand, if $j$ and $k$ contain the same power of two, then $J$ and $K$ are odd and assuming the lemma holds for $J$ and $K$ co-prime, we obtain
\begin{align*}
 \langle \CalC_j,\CalC_k\rangle=\langle \CalC_J,\CalC_K\rangle=\frac{1}{J^2\cdot K^2}=\frac{\ell^4}{(\ell \cdot J)^2\cdot (\ell \cdot K)^2}=
 \frac{\gcd(j,k)^4}{j^2\cdot k^2}.
\end{align*}
The same argument applies also for the calculation of $\langle \CalS_j,\CalS_k\rangle$.

\medskip

Hence, for the rest of the proof we consider positive co-prime integers $j$ and $k$ and calculate
\begin{align*}
\langle \CalC_j,\CalC_k\rangle&=\int_0^1 \CalC(jx)\CalC(kx)dx=\sum_{m=0}^{jk-1}\int_{\frac{m}{jk}}^{\frac{m+1}{jk}}\CalC(jx)\CalC(kx)dx
=\frac{1}{jk}\,\sum_{m=0}^{jk-1}\int_{m}^{m+1}\CalC(y/k)\,\CalC(y/j)dy\\
&=\frac{1}{jk}\,\sum_{m=0}^{jk-1}\int_{0}^{1}\CalC\left(\frac{m+z}{k}\right)\,\CalC\left(\frac{m+z}{j}\right)dz.
\end{align*}
For every integer $m$ with $0\le m<jk$, there is a unique pair of integers $(a,b)$ with
\begin{gather}
\notag 0\le a<j,\quad m\bmod j=a,\\
\notag 0\le b<k,\quad m\bmod k=b.
\end{gather}
The Chinese remainder theorem states that, if $j$ and $k$ are co-prime, then every such pair $(a,b)$ appears exactly once
when $m$ is running from 0 to $jk-1$. This and the periodicity of $\CalC$ allow us to write
\begin{align*}
\langle \CalC_j,\CalC_k\rangle&=\frac{1}{jk}\,\sum_{a=0}^{j-1}\sum_{b=0}^{k-1}\int_{0}^{1}\CalC\left(\frac{b+z}{k}\right)\,\CalC\left(\frac{a+z}{j}\right)dz\\
&=\frac{1}{jk}\,\int_{0}^{1} \left[\sum_{a=0}^{j-1}\CalC\left(\frac{a+z}{j}\right)\right]\,\cdot\, \left[\sum_{b=0}^{k-1}\CalC\left(\frac{b+z}{k}\right)\right]dz.
\end{align*}
Due to symmetry, it is enough to evaluate the first sum. If $j$ is even, this sum is equal to zero. Indeed, then
\begin{align*}
\sum_{a=0}^{j-1}\CalC\left(\frac{a+z}{j}\right)&=\sum_{a=0}^{j/2-1}\CalC\left(\frac{a+z}{j}\right)+\sum_{a=j/2}^{j-1}\CalC\left(\frac{a+z}{j}\right)\\
&=\sum_{a=0}^{j/2-1}\left[4\left(\frac{1}{2}-\frac{a+z}{j}\right)-1\right]+\sum_{a=j/2}^{j-1}\left[4\left(\frac{a+z}{j}-\frac{1}{2}\right)-1\right]\\
&=\sum_{a=0}^{j/2-1}\left[1-4\,\frac{a+z}{j}\right]+\sum_{a=0}^{j/2-1}\left[4\,\frac{a+j/2+z}{j}-3\right]=0.
\end{align*}
If $j$ is odd, we consider first $0\le z<1/2$ and obtain in a similar way
\begin{align}
\label{eq:sumsplit}
\sum_{a=0}^{j-1}\CalC\left(\frac{a+z}{j}\right)&=\sum_{a=0}^{\frac{j-1}{2}}\CalC\left(\frac{a+z}{j}\right)+\sum_{a=\frac{j+1}{2}}^{j-1}\CalC\left(\frac{a+z}{j}\right)\\
\notag&=\sum_{a=0}^{\frac{j-1}{2}}\left[4\left(\frac{1}{2}-\frac{a+z}{j}\right)-1\right]+\sum_{a=\frac{j+1}{2}}^{j-1}\left[4\left(\frac{a+z}{j}-\frac{1}{2}\right)-1\right]\\
\notag&=1-4\,\frac{z}{j}+\sum_{a=1}^{\frac{j-1}{2}}\left[1-4\,\frac{a+z}{j}+4\,\frac{a+\frac{j-1}{2}+z}{j}-3\right]\\
\notag &=1-4\,\frac{z}{j}+\frac{j-1}{2}\left(2\cdot \frac{j-1}{j}-2\right)=\frac{1-4z}{j}.
\end{align}
If $1/2\le z<1$, we proceed similarly and split the sum in \eqref{eq:sumsplit} into $0\le a\le \frac{j-3}{2}$ and $\frac{j-1}{2}\le a \le j-1$, which leads to
\[
\sum_{a=0}^{j-1}\CalC\left(\frac{a+z}{j}\right)=\frac{4z-3}{j}.
\]
Finally, we obtain
\[
\langle \CalC_j,\CalC_k\rangle=\frac{1}{jk}\int_0^{1/2}\frac{1-4z}{j}\cdot \frac{1-4z}{k}dz+\frac{1}{jk}\int_{1/2}^1 \frac{4z-3}{j}\cdot\frac{4z-3}{k} dz=\frac{1}{3j^2k^2}.
\]

\medskip

The mixed inner products $\langle \CalC_j,\CalS_k\rangle$ vanish due to the symmetry around the middle point $1/2.$
Indeed, $\CalC(x)=\CalC(1-x)$ and $\CalS(x)=-\CalS(1-x)$ for $x\in (0,1)$ and this relation extends easily also to $\CalC_j$ and $\CalS_k.$
Consequently, $\langle \CalC_j,\CalS_k\rangle=0$ for all $j,k\in\N.$

\medskip

Finally, the inner products $\langle \CalS_j,\CalS_k\rangle$ could be calculated similarly to the method used above for $\langle \CalC_j,\CalC_k\rangle$,
but they can actually be derived easily from what we have shown already. For example, if $j$ and $k$ are both odd, we get $\CalS(jx)=\CalC(jx-1/4)=\CalC(jx-j/4)$
if $4$ divides $j-1$ and $\CalS(jx)=-\CalC(jx-3/4)=\CalC(jx-j/4)$ if $4$ divides $j-3.$ Consequently,
\begin{align*}
\langle \CalS_j,\CalS_k\rangle&=\int_0^1 \CalS(jx)\CalS(kx)dx = \int_{0}^{1} \CalC(j(x-1/4))\CalC(k(x-1/4))dx=\int_0^1 \CalC(jx)\CalC(kx)dx=\langle \CalC_j,\CalC_k\rangle
\end{align*}
if $4$ divides both $j-1$ and $k-1$. Similarly, one can argue if 4 divides $j-3$ and $k-3$. In both these cases $\frac{j+k}{2\gcd(j,k)}$ is odd and the result follows.
The other cases are treated similarly.
\end{proof}

\subsection{Spectral properties of ${\mathcal R}_1$}\label{sec:spectra}

The Gram matrix of \eqref{eq:R1} played a crucial role in \cite{SV} as its spectrum determines the Riesz constants of \eqref{eq:R1}.
Also, it was observed that it is somehow simpler to study first the spectrum of the Gram matrix of the truncated system
\[
{\mathcal R}_1^N=\{1\}\cup\{\sqrt{3}\,\CalC_k,\sqrt{3}\,\CalS_k:1\le k\le N\}
\]
and then pass to the limit $N\to\infty.$ It is quite natural to re-order the Gram matrix of ${\mathcal R}_1^N$ as
\[
G_N=\left[\begin{matrix}1&0&0\\0&G_N^{\CalC}&0\\0&0&G_N^{\CalS}\end{matrix}\right],
\]
where $G_N^{\CalC}=(\langle \CalC_j,\CalC_k\rangle)_{j,k=1}^N$ is the Gram matrix of $\{\CalC_k:1\le k\le N\}$ and
$G_N^{\CalS}=(\langle \CalS_j,\CalS_k\rangle)_{j,k=1}^N$ is the Gram matrix of $\{\CalS_k:1\le k\le N\}$.
By Lemma \ref{Lem21}, the entries of $G_N^{\CalC}$ and $G_N^{\CalS}$ are equal in the absolute value, but they might have a different sign.
Interestingly, when working on \cite{SV}, numerical evidence suggested that the spectra of $G_N^{\CalC}$ and $G_N^{\CalS}$
are equal. The aim of this section is to explain this phenomenon.

\begin{theorem}
Let $N \ge 1$. Then the spectra of $G_N^{\CalC}$ and $G_N^{\CalS}$ coincide.
\end{theorem}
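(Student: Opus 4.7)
The natural strategy is to exhibit an explicit diagonal similarity between the two matrices. By Lemma~\ref{Lem21}, the entries of $G_N^{\CalC}$ and $G_N^{\CalS}$ coincide in absolute value and are supported on the same index pairs (namely those $(j,k)$ whose prime factorizations share the same power of $2$); they differ only in sign, and that sign is dictated by the parity of $(j+k)/(2\gcd(j,k))$. The plan is therefore to construct a diagonal matrix $D=\mathrm{diag}(\varepsilon_1,\dots,\varepsilon_N)$ with $\varepsilon_j\in\{\pm1\}$ such that $D\,G_N^{\CalC}\,D=G_N^{\CalS}$; since $D^2=I$, this yields $G_N^{\CalS}=D^{-1}G_N^{\CalC}D$, and the two matrices are similar, hence share their spectra.

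The choice of $D$ is driven by the sign formula. For each $j\in\{1,\dots,N\}$, write $j=2^{v_j} j'$ with $j'$ odd, and set
\[
\varepsilon_j:=(-1)^{(j'-1)/2},
\]
so $\varepsilon_j=+1$ when $j'\equiv 1\pmod 4$ and $\varepsilon_j=-1$ when $j'\equiv 3\pmod 4$. I would then verify $(D\,G_N^{\CalC}\,D)_{jk}=(G_N^{\CalS})_{jk}$ entry-wise. If $j$ and $k$ carry different powers of $2$, both entries vanish by Lemma~\ref{Lem21}, so there is nothing to check. If $v_j=v_k$, then writing $k=2^{v_j}k'$ with $k'$ odd one has $\gcd(j,k)=2^{v_j}\gcd(j',k')$, and consequently
\[
\frac{j+k}{2\gcd(j,k)}=\frac{j'+k'}{2\gcd(j',k')}.
\]

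The key identity is the parity equality
\[
\frac{j'+k'}{2}\equiv \frac{j'+k'}{2\gcd(j',k')}\pmod 2,
\]
which follows at once from $\tfrac{j'+k'}{2}=\gcd(j',k')\cdot\tfrac{j'+k'}{2\gcd(j',k')}$ together with the fact that $\gcd(j',k')$ is odd. Combining this with
\[
\varepsilon_j\varepsilon_k=(-1)^{(j'-1)/2+(k'-1)/2}=-(-1)^{(j'+k')/2},
\]
one obtains $\varepsilon_j\varepsilon_k=-(-1)^{(j+k)/(2\gcd(j,k))}$, which is precisely the sign of $\langle\CalS_j,\CalS_k\rangle$ as described in Lemma~\ref{Lem21}(3). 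Hence $(D\,G_N^{\CalC}\,D)_{jk}=\varepsilon_j\varepsilon_k\,\langle\CalC_j,\CalC_k\rangle=\langle\CalS_j,\CalS_k\rangle$ in all cases, giving $D\,G_N^{\CalC}\,D=G_N^{\CalS}$ and the theorem follows.

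The only non-routine step is guessing the correct diagonal sign pattern $\varepsilon_j=(-1)^{(j'-1)/2}$; after that, the verification reduces to the parity identity above, which is essentially a one-line calculation using that $\gcd(j',k')$ is odd.
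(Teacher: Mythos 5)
Your proof is correct and follows essentially the same route as the paper: a diagonal conjugation $D\,G_N^{\CalC}\,D=G_N^{\CalS}$ with $\varepsilon_j=\pm1$ determined by whether the odd part of $j$ is $\equiv 1$ or $\equiv 3 \pmod 4$, which is exactly the sign pattern the paper uses (the paper merely packages the reduction to odd indices as a block-diagonal decomposition over the $2$-adic valuation, whereas you handle the cross-block entries directly by noting they vanish on both sides). The parity verification via the oddness of $\gcd(j',k')$ matches the paper's argument, so nothing further is needed.
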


\begin{proof}
We use the fact (cf. Lemma \ref{Lem21}) that $\langle \CalC_j,\CalC_k\rangle=\langle \CalS_j,\CalS_k\rangle=0$
if the prime factorizations of $j$ and $k$ contain a different power of two. This allows us to re-group the indices $j\in\{1,\dots,N\}$
in such a way that $G_N^{\CalC}$ becomes again a block-diagonal matrix. We put for $0\le r\le \lfloor \log_2 N\rfloor$
\[
I_N^r=\{1\le j\le N: 2^r\mid j\ \text{and}\ 2^{r+1}\nmid j\}
\]
and denote $G_{N,r}^{\CalC}=(\langle \CalC_j,\CalC_k\rangle)_{j,k\in I_N^r}$ (and similarly for $G_{N,r}^{\CalS}$).
Then we can write
\[
G_N^{\CalC}=\left[\begin{matrix}G_{N,0}^{\CalC}&0&0&\dots\\0&G_{N,1}^{\CalC}&0&\dots\\0&0&G_{N,2}^{\CalC}&\dots\\0&0&0&\ddots\end{matrix}\right]
\]
and, again, similarly for $G_N^{\CalS}$. We show that $G_{N,r}^{\CalC}$ and $G_{N,r}^{\CalS}$ have the same spectrum for all $r$'s.

We start with $r=0$. Note, that $I_N^0$ collects all odd integers between 1 and $N$. We show that $G_{N,0}^{\CalS}=D_{N,0} G_{N,0}^{\CalC} D_{N,0}$,
where $D_{N,0}$ is a diagonal matrix with entries $\varepsilon_1,\varepsilon_3,\dots\in\{-1,+1\}$ on the diagonal, where we put
\[
\varepsilon_j:=\begin{cases}+1,&\quad\text{if}\ 4\mid(j-1),\\ -1,&\quad \text{if}\ 4\mid(j-3).\end{cases}
\]
Indeed, with this choice we have
\[
\varepsilon_j\cdot\varepsilon_k=\begin{cases}+1,&\quad\text{if}\ 4\mid (j+k-2),\\
-1,&\quad\text{if}\ 4\mid(j+k).
\end{cases}
\]
As $\gcd(j,k)$ is odd, $\varepsilon_j\cdot\varepsilon_k=-1$ if, and only if, $\frac{j+k}{2\gcd(j,k)}$ is even.
Consequently, by Lemma \ref{Lem21},
\[
\langle \CalS_j,\CalS_k\rangle=\varepsilon_j\cdot\varepsilon_k\cdot\langle \CalC_j,\CalC_k\rangle.
\]
Hence, $G_{N,0}^{\CalS}=D_{N,0} G_{N,0}^{\CalC} D_{N,0}$ and the result follows.

If $r\ge 1$, a similar argument goes through by setting $\varepsilon_j=\varepsilon_{j/2^r}$ for $j\in I_N^r.$ We leave the details to the reader.
\end{proof}

\subsection{Schauder bases of Lebesgue spaces}\label{sec:bases_leb}

The aim of this section is to extend our study of bases of piece-wise linear functions to Lebesgue spaces $L_q(0,1)$ with $1<q<\infty.$
To simplify the notation, we use the following standard trick. If $f$ is an (integrable) function defined on $(0,1)$, we consider its odd extension (denoted again by $f$) to $(-1,1)$
and consider its Fourier series expansion into the basis formed by $\{1\}\cup\{\cos(j\pi x),\sin(j\pi x):j\in\N\}$.
Due to the symmetry of this extension, the $\cos$-part of this expansion vanishes, which allows us to represent $f$ on $(0,1)$ as its \emph{Fourier sine series},
i.e., in terms of $\sin(j\pi x)$ with $j\in\N.$

It is a well known result (essentially due to Riesz \cite{Riesz} who investigated the convergence of partial Fourier sums in Lebesgue spaces, see also \cite{EL11,Grafakos,Zyg})
that  $(\sin(j\pi\cdot))_{j\in\mathbb{N}}$ is not only an orthogonal basis of $L_2(0,1)$ but also a Schauder basis of $L_{q}(0,1)$ for any $q\in(1,\infty)$. In the sequel we abbreviate
\begin{equation}\label{Eq. 2.2.2}
e_{j}(t):=\sqrt{2}\sin(j\pi t),\quad j\in\N,\quad t\in\R. 
\end{equation}

\begin{minipage}{0.45\textwidth}
We now introduce the piece-wise linear analogue of \eqref{Eq. 2.2.2}.
First, we observe that the hat function
$$S(t)= 
\begin{cases} 
2t & \text{for } 0 \leq t \leq \frac{1}{2}, \\
2(1 - t) & \text{for } \frac{1}{2} < t \leq 1,  \\
\end{cases}
$$
is a piece-wise linear approximation of $\sin(\pi t)$ and that $S(t)$ and $\sin(\pi t)$ coincide for $t\in\{0,1/2,1\}$.
Then, $S$ is extended by the anti-symmetric reflection rule to $[0,2]$ and, finally, periodically to $\R$ with period 2, i.e., 
\[
S(t)=(-1)^{\lfloor t\rfloor}\,S(\{t\}),\quad t\in \R,
\]
where $t=\lfloor t\rfloor + \{t\}$ is the decomposition of $t$ into its integer and fractional part.

\end{minipage}\hfill \begin{minipage}{0.45\textwidth}
\includegraphics[width=6cm]{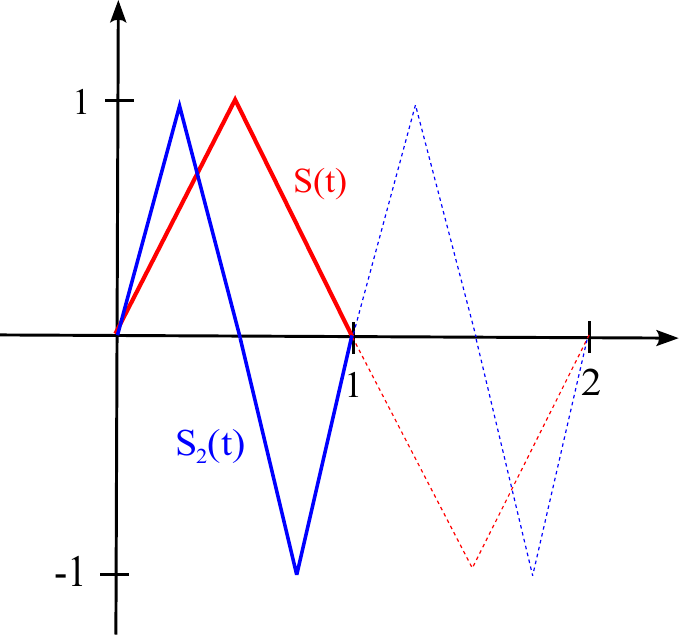}
\captionof{figure}{Hat function $S(t)$ and scaled version $S_2(t)$}
\end{minipage}\\

With this extension we define 
\begin{equation}
S_{j}(t)=S(jt),  \quad j\in\mathbb{N}, \quad t\in \R. \label{Equation 8.0.1.1}%
\end{equation}

Intuitively, the system $(S_j)_{j\in\N}$ is a piece-wise linear analogue of $(e_j)_{j\in\N}$.
Indeed, $S_j(t)$ interpolates $e_j(t)$ at points $t\in\{\frac{0}{2^j},\frac{1}{2^j},\dots,\frac{2^j}{2^j}\}.$
The main aim of this section is to prove Theorem \ref{Theorem 8.0.7.6}, i.e., to show that the system $(S_j)_{j\in \N}$ is a Schauder basis in $L_q(0,1)$ for every $1<q<\infty$.
The proof is based on the following observation (and we refer to \cite[p. 75]{Hig} for details).

Let us assume that $(\varphi_j)_{j\in\N}$ and $(\psi_j)_{j\in\N}$ are two sequences in a Banach space $X$. Further, we assume that there is a bounded and invertible linear operator
$T$ mapping $X$ onto $X$ such that $T(\varphi_j)=\psi_j$ for every $j\in\N.$
Then it holds that if $(\varphi_j)_{j\in\N}$ is a Schauder basis of $X$ then the same is true also for $(\psi_j)_{j\in\N}$.

The main idea behind this observation is rather straightforward: for every $f\in X$, the expansion of $T^{-1}f$
\[
T^{-1}f=\sum_{j=1}^{\infty}\alpha_j\varphi_j
\]
is transformed by $T$ into
\[
f=T\left(\sum_{j=1}^{\infty}\alpha_j\varphi_j\right)=\sum_{j=1}^{\infty}\alpha_j\, T\varphi_j=\sum_{j=1}^{\infty}\alpha_j\, \psi_j.
\]

\begin{remark}
Note that compared to the Riesz basis $\mathcal{R}_1$ from \eqref{eq:R1} we  now study one scale of functions $\{S_j\}_{j\in \mathbb{N}}$ only. In particular, the scales partly coincide, i.e., we have  $S_2 =\CalS $ which implies  $S_{2m}(\cdot)=S_2(m\cdot)=\mathcal{S}(m\cdot)=\mathcal{S}_m(\cdot)$. 
\end{remark}

The link between the two sequences  is given by the expansion of $S(t)=S_1(t)$ which is 
\begin{equation}
S(t)=\sum_{k=1}^{\infty}\widehat{S}(k)e_k(t)=\sqrt{2}\sum_{k=1}^{\infty}\widehat{S}(k)\sin(k\pi t), \label{Equation 8.0.1.3}
\end{equation}
where
\begin{equation}
\tau_k:=\widehat{S}(k)=\int_0^1 S(t)e_k(t)dt=\sqrt{2}\int_{0}^{1}S(t)\sin(k\pi t)dt. \label{Equation 8.0.1.3'}
\end{equation}

The behavior of the Fourier coefficients $(\tau_k)_{k\in \N}$ is crucial for our later studies. Due to the simple form of the function $S(t)$, they can be easily calculated.

\begin{proposition}\label{prop:1.5} For the sequence $(\tau_k)_{k\in \mathbb{N}}$ it holds   
\[
\tau_{2k-1}=(-1)^{k-1}\frac{4\sqrt{2}}{\pi^2} \frac{1}{(2k-1)^2} \quad \text{and}\quad  \tau_{2k}=0 \quad \text{for}\quad  k \in \mathbb{N}.
\]
\end{proposition}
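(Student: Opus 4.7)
The plan is to compute the integral \eqref{Equation 8.0.1.3'} directly, exploiting the symmetry of the hat function $S$ about the point $t=1/2$. Since $S(1-t)=S(t)$ for $t\in[0,1]$, the substitution $t\mapsto 1-t$ in \eqref{Equation 8.0.1.3'} together with the identity $\sin(k\pi(1-t))=(-1)^{k+1}\sin(k\pi t)$ yields
\[
\tau_k=(-1)^{k+1}\tau_k,
\]
which immediately forces $\tau_k=0$ whenever $k$ is even. This disposes of the second claim $\tau_{2k}=0$ without any calculation.

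For odd indices $k=2m-1$, the same symmetry collapses the integral to the first half of $[0,1]$, where $S$ is simply the linear function $2t$. Concretely, I would write
\[
\tau_{2m-1}=2\sqrt{2}\int_0^{1/2}S(t)\sin((2m-1)\pi t)\,dt=4\sqrt{2}\int_0^{1/2}t\sin((2m-1)\pi t)\,dt,
\]
and then evaluate the remaining integral by a single integration by parts. This produces boundary terms involving $\cos((2m-1)\pi/2)$, which vanish for odd $k$, and $\sin((2m-1)\pi/2)=(-1)^{m-1}$, so that
\[
\int_0^{1/2}t\sin((2m-1)\pi t)\,dt=\frac{(-1)^{m-1}}{(2m-1)^2\pi^2}.
\]
Multiplying by the prefactor $4\sqrt{2}$ gives the claimed formula for $\tau_{2m-1}$.

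There is no serious obstacle here: both parts are elementary. The only point worth a careful line is the sign convention in the symmetry step (to make sure the parity condition is $k$ even, not odd) and the evaluation of $\sin((2m-1)\pi/2)$; both are routine. The structure of the argument — exploit the symmetry of $S$ first to kill even modes, then reduce to the linear piece on $[0,1/2]$ and integrate by parts — gives a short self-contained proof of Proposition \ref{prop:1.5}.
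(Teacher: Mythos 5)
Your proposal is correct and takes essentially the same route as the paper: the symmetry $S(1-t)=S(t)$ kills the even modes, and for odd $k$ the integral reduces to $4\sqrt{2}\int_0^{1/2}t\sin(k\pi t)\,dt$, which is evaluated by a single integration by parts. All your sign bookkeeping (the parity condition and $\sin((2m-1)\pi/2)=(-1)^{m-1}$) checks out.
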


\begin{proof} Using integration by parts we compute 
\begin{eqnarray*}
\tau_{2k-1} =  \int_0^1 S(t) e_{2k-1}(t) \, dt
= 2\sqrt{2} \int_0^{1/2} 2t \sin((2k-1)\pi t) \, dt
= (-1)^{k-1}\frac{4\sqrt{2}}{(2k-1)^2\pi^2},
\end{eqnarray*}
which gives the desired result. Moreover, we obtain $\tau_{2k}=0$ by the symmetry of $S$ around the point $t=1/2$.
\end{proof}

Let us now consider a general function $f$ on $[0,1]$. First, we extend it to $(-1,1]$ by the anti-symmetric reflection rule, i.e., we  put 
\[
f(t):=-f(-t),\quad -1<t<0.
\]
Finally, we extend it to $\R$ (still denoting it by $f$) periodically with period 2.
Alternatively, if $t\in \R$ and $t=\lfloor t \rfloor+\{t\}$ is the decomposition of $t$ into its integer and fractional part, the extension of $f$ is given by
\[
f(t):=(-1)^{\lfloor t\rfloor} f(1-|1-2\{t/2\}|), \quad t\in\R. 
\]

We now construct a linear homeomorphism $T$ of $L_{q}(0,1)$ onto itself that maps each $e_{j}$ to $S_{j}$.
The definition is based on \eqref{Equation 8.0.1.3} and \eqref{Equation 8.0.1.3'}. Essentially, we need that $T$
maps $e_1$ to $S_1$ and that $T$ commutes with integer dilations. 
Once this is done it will follow from the general considerations sketched above that
the family $(S_{j})_{j\in\N}$ forms a basis of $L_{q}(0,1).$

\begin{lemma}
\label{Lemma 2.2.6} Let $q\in(1,\infty)$ and define the map $T$   by%
\begin{equation}
Tg(t)=\sum_{m=1}^{\infty}\tau_{m}g(mt), \qquad t\in (0,1). \label{Equation 8.0.1.10}%
\end{equation} 
Then $T$ has the following properties:
\bit 
\item[(i)] The map $T:L_{q}(0,1)\rightarrow$ $L_{q}(0,1)$ is bounded and linear  with $\left\Vert T\right\Vert \leq \frac{1}{\sqrt{2}}.$ 
\item[(ii)]  $Te_{j}=S_{j}$  for all $j\in\mathbb{N}$. 
\item[(iii)] $T$ has a bounded inverse.
\eit 
\end{lemma}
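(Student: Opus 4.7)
The plan is to tackle the three assertions in turn, with the crucial input being a dilation‑invariance property of the $L_q(0,1)$ norm under the antisymmetric periodic extension. Once that is in hand, (i) follows from the triangle inequality and an explicit evaluation of $\sum_m |\tau_m|$; (ii) is a one‑line substitution; and (iii) is obtained via a Neumann‑series expansion of $T$ around its leading term.

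For part (i), the central observation I would establish first is
\[
\|g(m\cdot)\|_{L_q(0,1)}=\|g\|_{L_q(0,1)}\qquad \text{for every } m\in\N.
\]
The change of variables $u=mt$ rewrites the left‑hand side as $m^{-1}\int_0^m |g(u)|^q\,du$, so it suffices to show $\int_0^m|g|^q=m\int_0^1 |g|^q$. For the extended $g$ one has $g(-u)=-g(u)$ (antisymmetric reflection) and $g(u+2)=g(u)$ (periodicity), hence $g(2-u)=-g(u)$, which gives $\int_1^2|g|^q=\int_0^1 |g|^q$, and then the claim for general integer $m$ follows from combining this with periodicity. Granting this, the triangle inequality yields $\|Tg\|_q\le \bigl(\sum_{m\ge 1}|\tau_m|\bigr)\|g\|_q$, and Proposition~\ref{prop:1.5} together with the identity $\sum_{k\ge 1}(2k-1)^{-2}=\pi^2/8$ evaluates the bracket to exactly $\tfrac{4\sqrt{2}}{\pi^2}\cdot\tfrac{\pi^2}{8}=\tfrac{1}{\sqrt{2}}$. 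The absolute summability also legitimizes the $L_q$‑convergence of the series defining $Tg$, so linearity is automatic.

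Part (ii) is a direct calculation: $Te_j(t)=\sqrt{2}\sum_{m\ge 1}\tau_m\sin(jm\pi t)$, which is precisely \eqref{Equation 8.0.1.3} evaluated at $jt$, i.e.\ $S(jt)=S_j(t)$. For part (iii) my plan is to split off the dominant term and use a Neumann series. Writing $T=\tau_1 I+R$ with $Rg(t)=\sum_{m\ge 2}\tau_m g(mt)$, the estimate from (i) gives $\|R\|\le \tfrac{1}{\sqrt{2}}-\tau_1$, so $\|R\|<\tau_1$ reduces to the numerical inequality $\tfrac{1}{\sqrt{2}}<2\tau_1=\tfrac{8\sqrt{2}}{\pi^2}$, i.e.\ $\pi^2<16$, which clearly holds. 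Consequently $T=\tau_1(I+\tau_1^{-1}R)$ is invertible with $\|T^{-1}\|\le (\tau_1-\|R\|)^{-1}\le (2\tau_1-\tfrac{1}{\sqrt{2}})^{-1}$. The main obstacle is the dilation‑invariance step in (i): everything else is either a routine summation or a Neumann‑series estimate that is driven entirely by the quantitative bound from (i) and the fact that the $m=1$ Fourier coefficient of $S$ is larger than the sum of all other coefficients in absolute value.
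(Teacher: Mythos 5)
Your proposal is correct and follows essentially the same route as the paper: the dilation isometry $\|g(m\cdot)\|_q=\|g\|_q$ for the odd, $2$-periodic extension, the triangle-inequality bound $\sum_m|\tau_m|=\frac{4\sqrt{2}}{\pi^2}\cdot\frac{\pi^2}{8}=\frac{1}{\sqrt{2}}$, and a Neumann series around $\tau_1\,\mathrm{Id}$ reducing to the same numerical inequality $\pi^2<16$. The only cosmetic difference is in (ii), where you substitute $jt$ directly into the uniformly convergent expansion of $S$ rather than recomputing the Fourier coefficients of $S_j$; both yield the identical identity.
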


\begin{proof}
\bit 
\item[(i)] Linearity is clear. 
For every $g\in L_{q}(0,1)$ it holds 
\begin{align}\label{eq-isometry}
\|g(m\cdot)\|_{q}^q=\int_{0}^{1}\left\vert g(mt)\right\vert ^{q}dt  &  =m^{-1}\int_{0}%
^{m}\left\vert {g}(s)\right\vert ^{q}ds=m^{-1}\sum_{k=1}^{m}%
\int_{k-1}^{k}\left\vert {g}(s)\right\vert ^{q}ds
=\int_{0}^{1}\left\vert g(s)\right\vert ^{q}ds=\|g\|_q^q, 
\end{align}
and by the triangle inequality and Proposition \ref{prop:1.5}, we  obtain  
\[
\left\Vert T\right\Vert \leq  \sum_{k=1}^{\infty}|\tau_{2k-1}|= \sum_{k=1}^{\infty}\frac{4\sqrt{2}}{(2k-1)^{2}%
\pi^{2}}=\frac{1}{\sqrt{2}}.
\]
\item[(ii)] Next we show that $Te_j=S_j$ for every $j\in\N$. For that purpose, we calculate
\begin{align}
\notag S_j&=\sum_{k=1}^\infty \widehat{S_j}(k)e_k=\sqrt{2}\sum_{k=1}^\infty e_k\int_0^1 S_j(t)\sin(k\pi t)dt=\sqrt{2}\sum_{k=1}^\infty e_k\int_0^1 S(jt)\sin(k\pi t)dt\\
\notag&=2\sum_{k=1}^\infty e_k\int_0^1 \sum_{m=1}^\infty \widehat{S}(m)\sin(jm\pi t)\sin(k\pi t)dt
=2\sum_{k=1}^\infty e_k\sum_{m=1}^\infty\widehat{S}(m)\int_0^1  \sin(jm\pi t)\sin(k\pi t)dt\\
\label{eq:T1} &=2\sum_{m=1}^\infty\widehat{S}(m)\sum_{k=1}^\infty e_k\int_0^1  \sin(jm\pi t)\sin(k\pi t)dt=\sum_{m=1}^\infty\widehat{S}(m) e_{jm}=\sum_{m=1}^\infty\tau_m e_{jm}
=\sum_{m=1}^\infty\tau_m e_j({m\,\cdot})=Te_j.
\end{align}

\item[(iii)] By the definition of $T$ and \eqref{eq-isometry} we observe that
\[
\left\Vert T-\tau_{1}\text{Id}\right\Vert \leq\sum_{j=1}^{\infty}\left\vert
\tau_{2j+1}\right\vert ,
\]
and so the invertibility of $T$ will follow from Theorem II.1.2  of \cite{Yos} (Neumann Series)
if we can show that%
\begin{equation}
\sum_{j=1}^{\infty}\left\vert \tau_{2j+1}\right\vert <\left\vert \tau
_{1}\right\vert.  \label{Equation 8.0.1.11}%
\end{equation}

From Proposition \ref{prop:1.5} we have 
\begin{equation}
\sum_{j=1}^{\infty}\left\vert \tau_{2j+1}\right\vert \leq\frac{4\sqrt{2}%
}{\pi^{2}}\left(  \frac{\pi^{2}}{8}-1\right)< \frac{4\sqrt{2}}{\pi^2}=\tau_1, \label{Equation 8.0.1.12}%
\end{equation}
from which (\ref{Equation 8.0.1.11}) follows.
\eit 
\end{proof}

We conclude with the main result of this section.  

\begin{theorem}\label{Theorem 8.0.7.6}
Let  $q\in(1,\infty).$ Then the family $(S_{j})_{j\in\mathbb{N}}$ forms a Schauder basis of $L_{q}(0,1)$. 
\end{theorem}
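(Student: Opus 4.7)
The plan is to combine three ingredients that have essentially been assembled in the preceding material: the classical fact that $(e_j)_{j\in\mathbb{N}}$ with $e_j(t)=\sqrt{2}\sin(j\pi t)$ is a Schauder basis of $L_q(0,1)$ for every $q\in(1,\infty)$, the operator $T$ constructed in Lemma \ref{Lemma 2.2.6}, and the abstract transfer principle recalled just after the definition of $S_j$. No new analytic estimate is needed beyond what has already been established.

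First I would fix $q\in(1,\infty)$ and recall Riesz's theorem, which ensures that $(e_j)_{j\in\mathbb{N}}$ forms a Schauder basis of $L_q(0,1)$. Hence for every $f\in L_q(0,1)$ there exists a unique scalar sequence $(\alpha_j)_{j\in\mathbb{N}}$ with $f=\sum_{j=1}^{\infty}\alpha_j e_j$, where convergence is in the norm of $L_q(0,1)$.

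Next I would invoke Lemma \ref{Lemma 2.2.6}: the operator $T$ defined by $Tg(t)=\sum_{m=1}^{\infty}\tau_m g(mt)$ is a bounded linear map of $L_q(0,1)$ into itself, admits a bounded inverse $T^{-1}$, and satisfies $Te_j=S_j$ for all $j\in\mathbb{N}$. Given any $f\in L_q(0,1)$, apply the basis expansion to $T^{-1}f\in L_q(0,1)$ to obtain unique coefficients $(\alpha_j)_{j\in\mathbb{N}}$ with $T^{-1}f=\sum_{j=1}^{\infty}\alpha_j e_j$. By continuity and linearity of $T$ we may apply $T$ term-by-term to this series, which yields
\[
f=T(T^{-1}f)=\sum_{j=1}^{\infty}\alpha_j\,Te_j=\sum_{j=1}^{\infty}\alpha_j\, S_j,
\]
with convergence in $L_q(0,1)$.

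It remains to verify uniqueness of the coefficients in the $S_j$-expansion of $f$. If $\sum_{j=1}^{\infty}\beta_j S_j=\sum_{j=1}^{\infty}\alpha_j S_j$ in $L_q(0,1)$, apply the bounded operator $T^{-1}$ to both sides; using $T^{-1}S_j=e_j$ and continuity one obtains $\sum_{j=1}^{\infty}(\beta_j-\alpha_j)e_j=0$ in $L_q(0,1)$, which forces $\alpha_j=\beta_j$ for all $j$ by the uniqueness of the $(e_j)$-expansion. This completes the proof. The only place where any real work was required is Lemma \ref{Lemma 2.2.6}, and in particular the Neumann-series bound used there to invert $T$; once that is in hand, the statement reduces to the standard observation that bounded invertible operators transport Schauder bases to Schauder bases, so no additional obstacle is expected.
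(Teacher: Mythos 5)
Your proposal is correct and follows essentially the same route as the paper: the paper likewise deduces the result from the fact that $(e_j)_{j\in\mathbb{N}}$ is a Schauder basis of $L_q(0,1)$ together with Lemma \ref{Lemma 2.2.6}, citing the standard transfer principle for bases under linear homeomorphisms, which you have simply written out in full (existence via applying $T$ to the expansion of $T^{-1}f$, uniqueness via applying $T^{-1}$).
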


\begin{proof}
Since the sequence $(e_{j})_{j\in\N}$ forms a basis of $L_{q}(0,1)$ and $T$ is a linear
homeomorphism of $L_{q}(0,1)$ onto itself with $Te_{j}=S_j$, $j\in
\mathbb{N}$,  it follows from \cite[p. 75]{Hig}, or \cite[Theorem 3.1, p. 20]{Sing},
that the sequence $(S_{j})_{j\in\N}$ forms also a Schauder basis of $L_{q}(0,1).$
\end{proof}

With only a minor modification of the argument, we can apply the same procedure also to the Hilbert space setting and
Riesz bases. Recall, that it was shown already in \cite{DDFHP} (with an alternative proof provided in \cite{SV}) that ${\mathcal R}_1$
is a Riesz basis with Riesz constants $A=1/2$ and $B=3/2$.
It is quite interesting that a straightforward modification of the proof of Theorem \ref{Theorem 8.0.7.6} (see \cite[Section VI.2.]{GK} for details)
provides the same conclusion with an improved lower Riesz constant $A$.

\begin{theorem}\label{thm:Riesz_1}
The sequence $(\sqrt{3}\, S_j)_{j\in\N}$ is a Riesz basis of $L_2(0,1)$
with Riesz constants  
\[
A:=\left\{\frac{4\sqrt{6}}{\pi^2}\left(2-\frac{\pi^2}{8}\right)\right\}^2=0.5787\dots\quad\text{and}\quad B:=\frac 32.
\]
\end{theorem}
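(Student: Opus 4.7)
The plan is to recycle the operator $T$ from Lemma~\ref{Lemma 2.2.6}, applied to the Hilbert space $L_2(0,1)$, and convert the bounds on $\|T\|$ and $\|T^{-1}\|$ into Riesz constants via the identity $S_j=Te_j$. Since $(e_j)_{j\in\N}$ is an orthonormal basis, for every $(\alpha_j)\in\ell_2$ one has $\bigl\|\sum_j\alpha_j e_j\bigr\|_2^2=\sum_j|\alpha_j|^2$, so
\[
\|T^{-1}\|^{-2}\sum_j|\alpha_j|^2 \;\le\; \Bigl\|\sum_j\alpha_j S_j\Bigr\|_2^2 \;\le\; \|T\|^2\sum_j|\alpha_j|^2.
\]
Multiplying by $3$ turns these into the desired Riesz inequalities for $(\sqrt{3}\,S_j)_{j\in\N}$. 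Completeness of the system follows at the same time, because $T$ is a linear homeomorphism of $L_2(0,1)$ onto itself and $(e_j)$ has dense span.

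For the upper bound I would use that the dilation $g\mapsto g(m\cdot)$ is an isometry on $L_2(0,1)$ (equation~\eqref{eq-isometry} with $q=2$) and that $\tau_{2k}=0$, so the triangle inequality gives
\[
\|T\|_{L_2\to L_2}\;\le\;\sum_{k=1}^{\infty}|\tau_{2k-1}|\;=\;\frac{4\sqrt{2}}{\pi^2}\sum_{k=1}^{\infty}\frac{1}{(2k-1)^2}\;=\;\frac{1}{\sqrt{2}},
\]
yielding the upper Riesz constant $B=3\cdot\tfrac12=\tfrac32$.

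The lower bound is the sharper part, and this is where the new constant $A$ appears. I would split $T=\tau_1\mathrm{Id}+R$ as in the proof of Lemma~\ref{Lemma 2.2.6}, where $Rg(t)=\sum_{j=1}^{\infty}\tau_{2j+1}g((2j+1)t)$, and again exploit the isometry property of the dilations to get
\[
\|R\|_{L_2\to L_2}\;\le\;\sum_{j=1}^{\infty}|\tau_{2j+1}|\;\le\;\frac{4\sqrt{2}}{\pi^2}\left(\frac{\pi^2}{8}-1\right)\;<\;\tau_1.
\]
By Neumann series (i.e.\ \eqref{Equation 8.0.1.11}), $T$ is invertible with
\[
\|T^{-1}\|_{L_2\to L_2}\;\le\;\frac{1}{\tau_1-\|R\|}\;\le\;\frac{\pi^2}{4\sqrt{2}\bigl(2-\pi^2/8\bigr)}.
\]
Squaring and multiplying by $3$ produces exactly
\[
A \;=\; \frac{3}{\|T^{-1}\|^2}\;\ge\;\left(\frac{4\sqrt{6}}{\pi^2}\Bigl(2-\frac{\pi^2}{8}\Bigr)\right)^{\!2}\;=\;0.5787\dots
\]

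I do not expect a real obstacle: every ingredient is already contained in Lemma~\ref{Lemma 2.2.6} and Proposition~\ref{prop:1.5}, and the $L_2$ setting lets us combine them directly with the orthonormality of $(e_j)$. The only mildly delicate point is bookkeeping the Neumann bound tightly enough to recover the stated value of $A$; the estimate must be applied with the explicit splitting $\tau_1+\sum_{j\ge 1}|\tau_{2j+1}|$ from \eqref{Equation 8.0.1.12}, rather than the cruder $\|T\|\le 1/\sqrt{2}$ used for the upper bound, otherwise one recovers only $A\ge 1/2$.
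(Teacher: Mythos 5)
Your proposal is correct and follows essentially the same route as the paper: transport the orthonormal basis $(e_j)$ by the operator $T$ of Lemma~\ref{Lemma 2.2.6}, bound $\|T\|$ by $1/\sqrt{2}$ via the dilation isometry, and get the lower constant from the Neumann-series bound $\|T^{-1}\|\le(\tau_1-\sum_{j\ge1}|\tau_{2j+1}|)^{-1}$; the paper merely absorbs the factor $\sqrt{3}$ into the operator ($R=\sqrt{3}\,T$) instead of multiplying by $3$ at the end. The constants match exactly, so there is nothing to add.
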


\begin{proof}
The proof is based on the following observation. Let $(\varphi_j)_{j\in\N}$ be an orthonormal basis of a (separable) Hilbert space $H$ and let $(\psi_j)_{j\in\N}$ be a sequence of elements of $H$.
Furthermore, let us assume that there is a bounded and invertible linear mapping $R$ of $H$ onto itself, such that $R\varphi_j=\psi_j$ for every $j\in\N.$
Then
\begin{align*}
\left\|\sum_{j=1}^\infty\alpha_j\psi_j\right\|^2=\left\|\sum_{j=1}^\infty\alpha_jR(\varphi_j)\right\|^2=\left\|R\left(\sum_{j=1}^\infty\alpha_j\varphi_j\right)\right\|^2\le
\|R\|^2\cdot \left\|\sum_{j=1}^\infty\alpha_j\varphi_j\right\|^2=\|R\|^2\cdot \sum_{j=1}^\infty \alpha_j^2
\end{align*}
and, similarly,
\begin{align*}
\left\|\sum_{j=1}^\infty\alpha_j\psi_j\right\|^2
 =\left\|R\left(\sum_{j=1}^\infty\alpha_j\varphi_j\right)\right\|^2
\ge 1/\|R^{-1}\|^2\cdot \sum_{j=1}^\infty \alpha_j^2.
\end{align*}
Hence, $(\psi_j)_{j\in\N}$ is a Riesz basis and its Riesz constants can be chosen as $A=1/\|R^{-1}\|^2$
and $B=\|R\|^2.$

\medskip

To apply this approach to our setting, we define $R:=\sqrt{3}\,T$ where $T$ was defined in \eqref{Equation 8.0.1.10}. With this notation, $R(e_j)=\sqrt{3}\,S_j$
and $(\sqrt{3}\, S_j)_{j\in\N}$ is correctly normalized. Then we obtain, that $(\sqrt{3}\, S_j)_{j\in\N}$ is a Riesz basis of $L_2(0,1)$
with Riesz constants $B:=\|R\|^2=3\,\|T\|^2\le 3/2$ and $A:=1/\|R^{-1}\|^2.$ To bound $A$, we rewrite $R$ as $R=\sqrt{3}\tau_1 \mathrm{Id}-M$,
where
\[
Mg(t)=-\sum_{m=1}^\infty \sqrt{3}\tau_{2m+1}g(m\cdot t).
\]
Therefore,
\[
\|M\|\le \sqrt{3}\sum_{m=1}^\infty|\tau_{2m+1}|=\frac{4\sqrt{6}}{\pi^2}\left(\frac{\pi^2}{8}-1\right).
\]
Finally, we denote $a:=\sqrt{3}\tau_1$ and the Neumann series technique (applicable since $\|M\|/a\leq \frac{\pi^2}{8}-1<1$) gives
\begin{align*}
a\|R^{-1}\|=\left\|(R/a)^{-1}\right\|=\left\|(\mathrm{Id}-M/a)^{-1}\right\|\le \sum_{m=0}^\infty\frac{\|M\|^m}{a^m}=\frac{1}{1-\|M\|/a}.
\end{align*}
This gives
\[
A=1/\|R^{-1}\|^2\ge \big(a-\|M\|\big)^2\ge\left\{\frac{4\sqrt{6}}{\pi^2}\left(2-\frac{\pi^2}{8}\right)\right\}^2=0.5787\dots.
\]
\end{proof}

\section{Multivariate bases}\label{Sec:3}

The main aim of this section is to investigate the multivariate counterparts of the results presented in Section \ref{sec:bases_leb}.
We start with the tensor product analogue of the basis $(S_j)_{j\in\N}$.

\subsection{Tensor product basis in $L_q(0,1)^n$ generated by \( S \)}

Throughout this section we shall again assume that \(1 < q < \infty\). 
Our aim is to prove (cf. Theorem \ref{thm:123}) that the tensor products of functions from the family $(S_{j})_{j\in\mathbb{N}}$   form a basis in the Lebesgue space
$L_q(0, 1)^n$, provided $n \leq  3$.

First, we introduce the notation connected to multivariate functions and indices, cf. \cite{Lang17}. Let \[ \mathbf{m} = (m_1, \ldots, m_n), \quad \mathbf{k} = (k_1, \ldots, k_n) \in \mathbb{N}^n \] be multi-indices, and let \( x = (x_1, \ldots, x_n) \in \mathbb{R}^n \).
We write
\[
\mathbf{m}x = (m_1 x_1, \ldots, m_n x_n), \quad \mathbf{m}\mathbf{k} = (m_1 k_1, \ldots, m_n k_n)
\]
and denote
\[
\mathbf{m} \leq \mathbf{k} \quad \text{if} \quad m_i \leq k_i \quad \text{for each} \quad i \in \{1, \ldots, n\}. 
\]
Moreover,  we set
\[
\mathbf{1} := (1, \ldots, 1).
\]
For a given function \( \varphi : \R \to \mathbb{R} \), \( \mathbf{m} \in \mathbb{N}^n \), and \( x \in \mathbb{R}^n \), we put 
\begin{equation}\label{eq:0.1}
\varphi_{\mathbf{m}}(x) := \varphi(m_1 x_1) \cdots \varphi(m_n x_n).
\end{equation}
Below we choose $\varphi(t)=S(t)$, $t\in \mathbb{R}$, and study the properties of the system $(S_{\mathbf{m}})_{\mathbf{m}\in\N^n}$. Note that later on we replace the tensor products by inner products and consider also systems
\begin{equation}\label{eq:1.3a}
\varphi(\mathbf{m}\cdot x)=\varphi(m_1x_1+\ldots + m_nx_n), \quad x \in \mathbb{R}^n,
\end{equation}
i.e., $\mathbf{m}\cdot x$ is the scalar product, since these functions can be reproduced by NNs in an efficient way where we do not suffer the 'curse of dimension'.

\begin{remark}\label{def:Schauder_n} Recall that the system \( \{\varphi_{\mathbf{m}}\}_{\mathbf{m} \in \mathbb{N}^n} \) is a Schauder basis in \( L_q(0,1)^n \), \( q \in (1,\infty) \), if,
and only if, for any \( f \in L_q(0,1)^n \), there exists a unique sequence \( \{\alpha_{\mathbf{m}}\}_{\mathbf{m} \in \mathbb{N}^n} \) of scalars such that
\begin{equation}\label{eq:0.2}
f = \sum_{\mathbf{m} \in \mathbb{N}^n} \alpha_{\mathbf{m}} \varphi_{\mathbf{m}} \quad \text{in} \quad L_q(0,1)^n. 
\end{equation}
The convergence in \eqref{eq:0.2} is interpreted as 
\[
\lim_{\min\{m_1, \ldots, m_n\} \to \infty} \left\| f - \sum_{\mathbf{k} \leq \mathbf{m}} \alpha_{\mathbf{k}} \varphi_{\mathbf{k}} \right\|_{L_q(0,1)^n} = 0,
\]
which is sometimes referred to as the convergence in the \emph{Pringsheim sense}.

\end{remark}

Let us recall, cf. \eqref{Eq. 2.2.2}, that for \( e(t) = \sqrt{2} \sin(\pi t) \), \( t \in \mathbb{R} \), we denoted
\[
e_j(t) = e(jt) = \sqrt{2} \sin(j \pi t), \quad t \in \mathbb{R}, \quad j\in \mathbb{N}.
\]
We use that the system \( (e_j(t))_{j\in\N} \) is an orthonormal basis in \( L_2(0,1) \) and consider the corresponding tensor product orthonormal basis
$(e_{\mathbf{m}})_{\mathbf{m}\in\N^n}$ in \( L_2(0,1)^n \), which is defined as
\begin{equation}\label{eq:em}
e_{\mathbf{m}}(x) = e_{m_1}(x_1) \cdots e_{m_n}(x_n) = 2^{n/2} \sin(\pi m_1 x_1) \cdots \sin(\pi m_n x_n), \quad x \in \mathbb{R}^n, \quad \mathbf{m} \in \mathbb{N}^n.
\end{equation}

It is very well known that the sequence \( (e_{\mathbf{m}})_{\mathbf{m} \in \mathbb{N}^n} \)
is orthonormal in \( L_2(0,1)^n \). Indeed, the orthonormality is straightforward and
the completeness of this system in $L_2(0,1)^n$ follows from \cite[Sect. II.4, Prop. 2 and p.~50]{ReSi}. In what follows, using classical results from the analysis of multivariate Fourier series, we show that
 \( (e_{\mathbf{m}})_{\mathbf{m} \in \mathbb{N}^n} \) is a Schauder basis in the Lebesgue space \( L_q(0,1)^n \).

\begin{proposition}\label{prop:1.3} Let  \( f \in L_q(0,1)^n \), where $n\ge 1$ and  \( q \in (1,\infty) \), and set
\begin{equation}\label{eq:1.1}
\hat{f}(\mathbf{k}) := \int_{(0,1)^n} f(x) e_{\mathbf{k}}(x) \, dx, \quad \mathbf{k} = (k_1, \ldots, k_n) \in \mathbb{N}^n.
\end{equation}
Then
\begin{equation}\label{eq:1.2}
f = \sum_{\mathbf{m} \in \mathbb{N}^n} \hat{f}(\mathbf{m}) e_{\mathbf{m}}
\end{equation}
in the sense of \eqref{eq:0.2}.
\end{proposition}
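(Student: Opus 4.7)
The plan is to reduce the Pringsheim convergence of the $n$-dimensional sine series to the one-dimensional theorem of M.~Riesz recalled at the start of Section \ref{sec:bases_leb}. For every $\mathbf{N}\in\N^n$ I introduce the rectangular partial sum operator
\[
S_{\mathbf{N}}f:=\sum_{\mathbf{k}\le\mathbf{N}}\hat f(\mathbf{k})\,e_{\mathbf{k}},
\]
and, for each coordinate $i\in\{1,\dots,n\}$, the one-directional partial sum operator
\[
(P^{(i)}_{N_i}f)(x):=\sum_{k=1}^{N_i}\Bigl(\int_0^1 f(x_1,\dots,x_{i-1},t,x_{i+1},\dots,x_n)\,e_k(t)\,dt\Bigr)\,e_k(x_i).
\]
The key structural observation is that the operators $P^{(i)}_{N_i}$ commute pairwise and their composition equals $S_{\mathbf{N}}$, i.e.\ $S_{\mathbf{N}}=P^{(1)}_{N_1}\circ\cdots\circ P^{(n)}_{N_n}$.

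By the cited Riesz theorem there exists a constant $C_q<\infty$ such that the one-variable partial sum operator $g\mapsto\sum_{k=1}^{M}\langle g,e_k\rangle\,e_k$ is bounded on $L_q(0,1)$ with norm at most $C_q$, uniformly in $M\in\N$. Applying this estimate in the $i$-th variable while freezing the others, and integrating the $q$-th power in the remaining variables via Fubini's theorem, yields $\|P^{(i)}_{N_i}\|_{L_q(0,1)^n\to L_q(0,1)^n}\le C_q$ uniformly in $N_i$ and in $i$. Composing the $n$ operators gives the uniform bound
\[
\sup_{\mathbf{N}\in\N^n}\|S_{\mathbf{N}}\|_{L_q(0,1)^n\to L_q(0,1)^n}\le C_q^n.
\]

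With this uniform bound, the rest is a routine approximation argument. Finite sine polynomials are dense in $L_q(0,1)^n$: continuous functions compactly supported in the open cube are dense in $L_q(0,1)^n$, and their odd $2$-periodic extension to $\R^n$ is continuous and periodic in each variable, hence approximable uniformly -- and thus in $L_q$ -- by multivariate trigonometric polynomials containing only sines in each coordinate. Given $\varepsilon>0$, pick such a polynomial $g=\sum_{\mathbf{k}\le\mathbf{M}}c_{\mathbf{k}}e_{\mathbf{k}}$ with $\|f-g\|_q<\varepsilon$. For every $\mathbf{N}\ge\mathbf{M}$ one has $S_{\mathbf{N}}g=g$, so
\[
\|S_{\mathbf{N}}f-f\|_q\le\|S_{\mathbf{N}}(f-g)\|_q+\|g-f\|_q\le(C_q^n+1)\varepsilon,
\]
which is exactly the Pringsheim convergence \eqref{eq:1.2}. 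Uniqueness of the coefficients follows by testing against $e_{\mathbf{j}}\in L_{q'}(0,1)^n$: if $f=\sum_{\mathbf{m}}\alpha_{\mathbf{m}}e_{\mathbf{m}}$ converges in the Pringsheim sense in $L_q$, then for $\mathbf{N}\ge\mathbf{j}$ the pairing $\langle S_{\mathbf{N}}f,e_{\mathbf{j}}\rangle$ equals $\alpha_{\mathbf{j}}$ and passes to $\hat f(\mathbf{j})$ in the limit.

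The main technical point to take care of is the clean iteration of the one-dimensional Riesz bound in the Fubini step and the precise identification of a rectangular partial sum as a composition of the directional operators $P^{(i)}_{N_i}$; once these are in place, the density of sine polynomials and the uniqueness argument are entirely standard.
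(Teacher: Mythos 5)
Your proof is correct, but it takes a genuinely different route from the paper. The paper simply extends $f$ oddly in each coordinate to $(-1,1)^n$ and invokes the multivariate result of Weisz \cite[Thm.~4.1]{Weisz} on rectangular ($=$ Pringsheim) convergence of trigonometric Fourier series in $L_q$, the oddness forcing all non-sine terms to vanish; the whole argument is three lines plus a normalization of coefficients. You instead \emph{derive} the multivariate statement from the one-dimensional Riesz theorem: you factor the rectangular partial sum as a composition of commuting one-directional projections $P^{(i)}_{N_i}$, transfer the univariate uniform bound $C_q$ coordinatewise via Fubini to get $\sup_{\mathbf N}\|S_{\mathbf N}\|\le C_q^n$, and finish with density of sine polynomials. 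All the steps hold up: the factorization is correct, the frozen-variable application of the one-dimensional bound is legitimate (the slices lie in $L_q(0,1)$ for a.e.\ choice of the remaining variables), and the density of multivariate sine polynomials follows from your odd-reflection argument (the only implicit step is symmetrizing a general approximating trigonometric polynomial to retain only sine products, which is standard). What your approach buys is self-containedness -- it rests only on the univariate fact the paper already states and uses in Section \ref{sec:bases_leb} -- together with an explicit quantitative bound $C_q^n$ on the basis constant; it is in fact the same mechanism ($S_{\mathbf N}$ as a tensor product of uniformly bounded partial-sum projections) that the paper deploys later in Section \ref{sec:tensor}. What the paper's route buys is brevity and the avoidance of the density and symmetrization bookkeeping, at the cost of citing a heavier external theorem. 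Your closing uniqueness argument is a harmless bonus: the proposition as stated only asserts the representation, not uniqueness.
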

\begin{proof}
Extend \( f \in L_q(0,1)^n \) in each of its variables to \((-1,1)\) by {\em oddness} (i.e., reflect $f$ across each coordinate axis with a sign flip). Thus,   define $f:(-1,1)^n\rightarrow \mathbb{R}$ by 
\begin{equation}\label{extend-odd}
f(x_1,\ldots, x_n):=\sgn(x_1)\dots \sgn(x_n)f(|x_1|, \ldots , |x_n|), \qquad x\in (-1,1)^n.
\end{equation}

Using the result of Weisz \cite[Thm.~4.1]{Weisz} and the oddness of \( f \) in all its variables we obtain
\[
f = \sum_{\mathbf{m} \in \mathbb{N}^n} c_f(\mathbf{m}) e_{\mathbf{m}} \quad \text{in} \quad L_q(-1,1)^n,
\]
where
\[
c_f(\mathbf{k}) = \int_{(-1,1)^n} f(x) e_{\mathbf{k}}(x) \, dx, \quad \mathbf{k} = (k_1, \ldots, k_n) \in \mathbb{N}^n. 
\]
Consequently, we obtain \eqref{eq:1.2} with
\[
\hat{f}(\mathbf{k}) = 2^{-n} c_f(\mathbf{k}), \quad \mathbf{k} = (k_1, \ldots, k_n) \in \mathbb{N}^n.\qedhere
\]
\end{proof}

Now put $\varphi(t):=S(t)$, $t\in \mathbb{R}$, and according to \eqref{eq:0.1} consider for \(\mathbf{m} \in \mathbb{N}^n\) the functions 
\[
S_{\mathbf{m}}(x)=S(m_1x_1)\cdots S(m_nx_n), \quad x\in \mathbb{R}. 
\]
Following the ideas from  \cite{Lang17} we proceed similarly to the one-dimensional case: Again we construct a linear
homeomorphism $T$ of $L_{q}(0,1)^n$ onto itself, decomposing into a linear combination of certain isometries, that maps  $e_{\mathbf{m}}$ to
$S_{\mathbf{m}}$ for all $\mathbf{m}\in \mathbb{N}^n$.  From this we deduce  that
$(S_{\mathbf{m}})_{\mathbf{m}\in\N^n}$ forms a Schauder basis of $L_{q}(0,1)^n.$ 

To define the homeomorphism $T$, we need the following notation.
Given any function \( f \) on \([0,1)^n\), we extend it to \((-1,1)^n\) as in \eqref{extend-odd}. 
In the next step, $f$ is extended to $\R^n$ periodically, with period 2 in each variable, see Fig. \ref{fig-f-signs}. With a slight abuse of notation,
this extension is denoted by $f$ again.

\begin{figure}[h]
\begin{center}
\includegraphics[width=6cm]{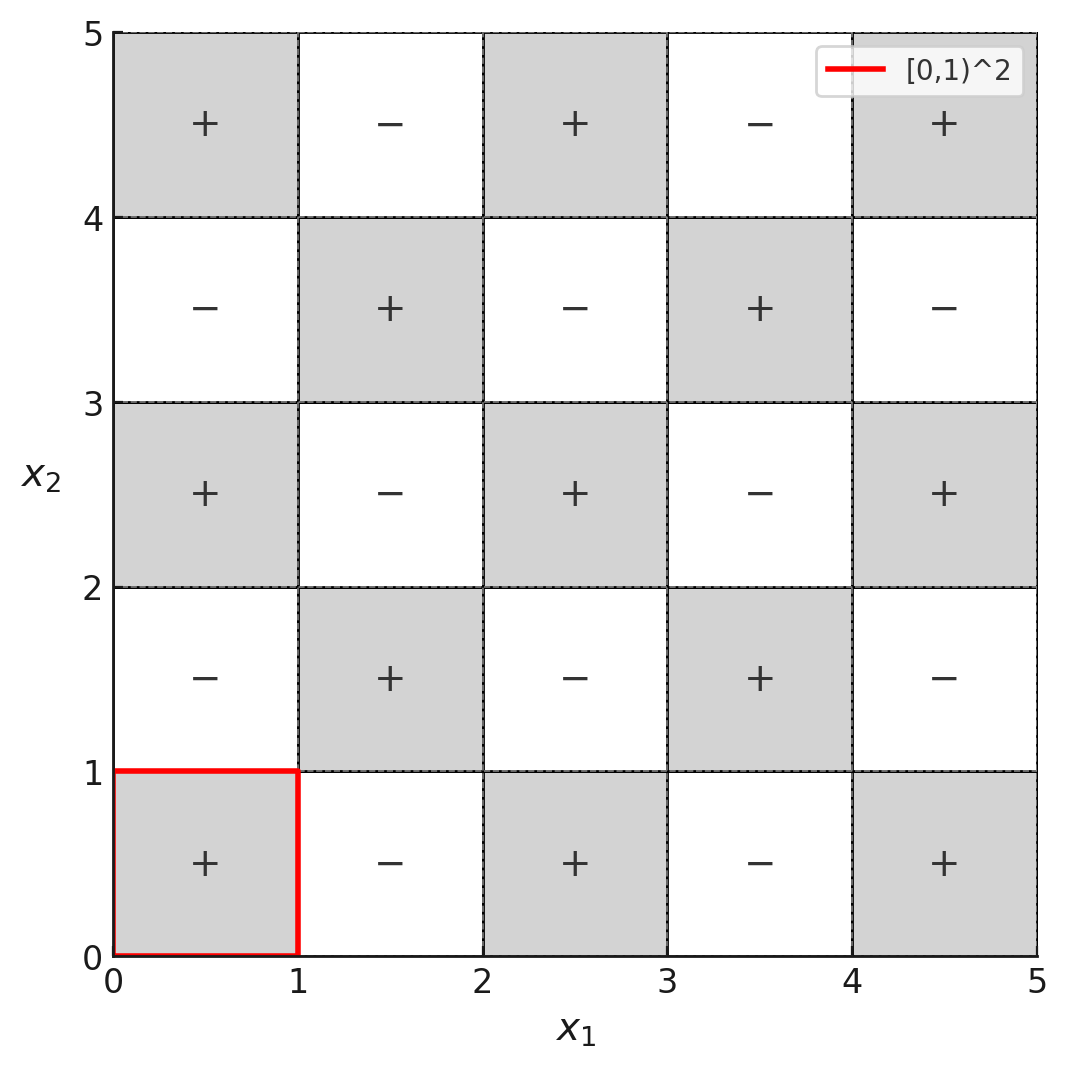}
\end{center}
\captionof{figure}{Illustration of the signs of $f$ in case $n=2$}
\label{fig-f-signs}
\end{figure}

Furthermore, the definition of $T$ features the Fourier coefficients of $S_{\mathbf{1}}(x)=S(x_1)\cdots S(x_n)$ as described in \eqref{eq:1.2}
and which are denoted and calculated as follows
\begin{equation}\label{eq:1.4}
\tau_{\mathbf{m}}:=\widehat{S_\mathbf{1}}(\mathbf{m})=\int_{(0,1)^n}S_1(x)e_{\mathbf{m}}(x)dx=\prod_{i=1}^n \int_0^1 S(x_i)e_{m_i}(x_i)dx_i=\prod_{i=1}^n \tau_{m_i}.
\end{equation}
Here, $\tau_{m_i}$ was defined in \eqref{Equation 8.0.1.3'} as the $m_i$-th Fourier coefficient of $S$.
Note that \(\widehat{S_{\mathbf 1}}(\mathbf{m}) = 0\) for every \(\mathbf{m} = (m_1, \ldots, m_n)\)
with some even \(m_i\), \(i \in \{1, \ldots, n\}\).

\begin{lemma}
\label{Lemma 2.2.6'} Let $q\in(1,\infty)$, $n\in \mathbb{N}$,  and define the map $T$   by%
\begin{equation}
Tg(x):=\sum_{\mathbf{m}\in \mathbb{N}^n}\tau_{\mathbf{m}}g(\mathbf{m}x), \qquad x\in (0,1)^n \label{Equation 8.0.1.10-2}%
\end{equation} 
(convergence is considered in the sense of \eqref{eq:0.2}). 
Then $T$ has the following properties: 
\bit 
\item[(i)] The map $T:L_{q}(0,1)^n\rightarrow$ $L_{q}(0,1)^n$ is bounded and linear  with $\left\Vert T\right\Vert \leq (\sqrt{2})^{-n}.$ 
\item[(ii)]  $Te_{\mathbf{m}}=S_{\mathbf{m}}$  for
all $\mathbf{m}\in\mathbb{N}^n$.  
\item[(iii)] $T$ has a bounded inverse if the coefficients $\tau_{\mathbf{m}}$ satisfy 
\begin{equation}\label{eq:1.7}
\sum_{\mathbf{k} \in \mathbb{N}^n, \, \mathbf{k} \neq \mathbf{1}} \left| \tau_{2\mathbf{k}-\mathbf{1}} \right| < |\tau_{\mathbf{1}}|.
\end{equation}
\eit 
\end{lemma}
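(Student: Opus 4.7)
The plan is to follow the same three-step template as in the univariate Lemma \ref{Lemma 2.2.6}, relying on the product structure of $\tau_{\mathbf{m}} = \prod_i \tau_{m_i}$ established in \eqref{eq:1.4} and on the fact that the dilation operator $g \mapsto g(\mathbf{m}\,\cdot)$ is an isometry of $L_q(0,1)^n$ after the described odd-periodic extension.

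For part (i), I would first verify the isometry property: if $g$ is extended by odd reflection to $(-1,1)^n$ and then 2-periodically in each coordinate, then $|g|^q$ is $1$-periodic in each variable (the sign flip from the odd extension disappears when taking absolute values raised to the $q$-th power). A change of variables $y_i = m_i x_i$ applied coordinate by coordinate then gives, exactly as in \eqref{eq-isometry},
\[
\|g(\mathbf{m}\,\cdot)\|_q^q = \prod_{i=1}^n m_i^{-1} \int_0^{m_i} |g(\ldots,y_i,\ldots)|^q\, dy_i\cdot(\text{other factors}) = \|g\|_q^q.
\]
Linearity of $T$ is immediate, and the triangle inequality combined with the product formula \eqref{eq:1.4} and Proposition \ref{prop:1.5} yields
\[
\|T\| \le \sum_{\mathbf{m}\in\mathbb{N}^n}|\tau_{\mathbf{m}}| = \prod_{i=1}^n\sum_{m_i=1}^\infty |\tau_{m_i}| = \prod_{i=1}^n \frac{1}{\sqrt{2}} = \frac{1}{(\sqrt{2})^n},
\]
using that $\tau_{2k}=0$.

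For part (ii), I would mirror the Fourier computation from \eqref{eq:T1} in the product setting. Expanding $S_{\mathbf{m}}$ in the tensor-product orthonormal basis $(e_{\mathbf{k}})_{\mathbf{k}\in\mathbb{N}^n}$ via Proposition \ref{prop:1.3} and using the factorization $S_{\mathbf{m}}(x) = \prod_i S(m_i x_i)$, the inner products split into a product over coordinates and each one-dimensional factor reproduces the identity $S_{m_i} = \sum_{\ell}\tau_\ell\, e_{m_i\ell}$ from the univariate case. Re-assembling the product gives
\[
S_{\mathbf{m}} = \sum_{\boldsymbol{\ell}\in\mathbb{N}^n} \Big(\prod_{i=1}^n \tau_{\ell_i}\Big)\, e_{\mathbf{m}\boldsymbol{\ell}} = \sum_{\boldsymbol{\ell}\in\mathbb{N}^n}\tau_{\boldsymbol{\ell}}\, e_{\mathbf{m}}(\boldsymbol{\ell}\,\cdot) = T e_{\mathbf{m}},
\]
which is exactly the action of $T$ on $e_{\mathbf{m}}$ given by \eqref{Equation 8.0.1.10-2}.

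For part (iii), I would again invoke the Neumann series argument from Theorem II.1.2 of \cite{Yos}. Since $\tau_{\mathbf{m}}=0$ whenever some coordinate $m_i$ is even, only indices of the form $\mathbf{m} = 2\mathbf{k}-\mathbf{1}$ with $\mathbf{k}\in\mathbb{N}^n$ contribute, so
\[
\|T - \tau_{\mathbf{1}}\,\mathrm{Id}\| \le \sum_{\mathbf{k}\in\mathbb{N}^n,\,\mathbf{k}\ne\mathbf{1}} |\tau_{2\mathbf{k}-\mathbf{1}}|,
\]
and the hypothesis \eqref{eq:1.7} guarantees this is strictly less than $|\tau_{\mathbf{1}}|$, so $T/\tau_{\mathbf{1}} = \mathrm{Id} - M/\tau_{\mathbf{1}}$ is invertible with bounded inverse given by the Neumann series.

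The only subtle step is the isometry claim in (i): one has to carefully check that the chosen extension in \eqref{extend-odd}, combined with 2-periodicity, indeed makes $|g|^q$ have period $1$ in each variable so that the integer-dilation change of variables produces a true isometry. The rest is bookkeeping on the product structure of $\tau_{\mathbf{m}}$, which fortunately factorizes cleanly by \eqref{eq:1.4}.
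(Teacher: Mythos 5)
Your proposal follows essentially the same route as the paper's proof: the isometry of the integer dilations via the odd-periodic extension, the triangle inequality combined with the factorization $\tau_{\mathbf{m}}=\prod_i\tau_{m_i}$ for the norm bound, the expansion of $S_{\mathbf{m}}$ through Proposition \ref{prop:1.3} for part (ii), and the Neumann series for part (iii). One small caveat: $|g|^q$ is not literally $1$-periodic under the extension (on $(k,k+1)$ one gets a reflection of $g$ rather than a translate), but the integrals over consecutive unit intervals still coincide, which is all the change of variables requires, so your argument goes through as in the paper's \eqref{eq-isometry'}.
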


\begin{proof}

\bit 
\item[(i)] Linearity is clear. Moreover, 
for  $g\in L_{q}(0,1)^n$  we see that 
\begin{align}\label{eq-isometry'}
\|g(\mathbf{m}\cdot)\|_{q}^q&=\int_{(0,1)^n}\left\vert g(\mathbf{m}x)\right\vert ^{q}dx  
 =\int_{(0,1)^n}\left\vert g(m_1x_1,\ldots, m_nx_n)\right\vert ^{q}dx  \notag\\
&  =\left(\prod_{i=1}^n m_i^{-1}\right)\int_{[0,m_1)\times \ldots [0,m_n)}%
\left\vert {g}(s)\right\vert ^{q}ds
=\int_{(0,1)^n}\left\vert g(s)\right\vert ^{q}ds=\|g\|_q^q. 
\end{align}
Thus, from \eqref{eq:1.4} and Proposition \ref{prop:1.5} we obtain
\begin{eqnarray*}
\left\Vert T\right\Vert &\leq &\sum_{\mathbf{k}\in\mathbb{N}^n}|\tau_{2\mathbf{k}-\mathbf{1}}|=
\left( \frac{4\sqrt{2}}{\pi^2} \right)^n
\sum_{\mathbf{k} \in \mathbb{N}^n} \frac{1}{(2\mathbf{k}-\mathbf{1})^2}\\
&=&  \left( \frac{4\sqrt{2}}{\pi^2} \right)^n \left(\sum_{k_1=1}^{\infty} \cdots \sum_{k_n=1}^{\infty} \frac{1}{(2k_1-1)^2 \cdots (2k_n-1)^2}\right)
\\
&=& \left( \frac{4\sqrt{2}}{\pi^2} \right)^n \left( \sum_{k=1}^{\infty} \frac{1}{(2k-1)^2} \right)^n
= \left( \frac{4\sqrt{2}}{\pi^2} \right)^n\left( \frac{\pi^2}{8} \right)^n=(\sqrt{2})^{-n}. 
\end{eqnarray*}
\item[(ii)] To show that $T e_{\mathbf{m}}=S_{\mathbf{m}}$ for every ${\mathbf{m}}\in\N^n$, we argue similarly as in \eqref{eq:T1}.
We employ Proposition \ref{prop:1.3} and calculate 
\begin{align*}
S_{\mathbf{m}}&=\sum_{\mathbf{k}\in\N^n}\widehat{S_{\mathbf{m}}}(\mathbf{k})e_{\mathbf{k}}
=\sum_{\mathbf{k}\in\N^n}e_{\mathbf{k}}\int_{(0,1)^n}S_{\mathbf{m}}(x)e_{\mathbf{k}}(x)dx
=\sum_{\mathbf{k}\in\N^n}e_{\mathbf{k}}\int_{(0,1)^n}S_{\mathbf{1}}(\mathbf{m}x)e_{\mathbf{k}}(x)dx\\
&=\sum_{\mathbf{k}\in\N^n}e_{\mathbf{k}}\int_{(0,1)^n} \sum_{\mathbf{l}\in\N^n} \widehat{S_{\mathbf{1}}}(\mathbf{l})e_{\mathbf{l}}(\mathbf{m}x)e_{\mathbf{k}}(x)dx
=\sum_{\mathbf{l}\in\N^n} \widehat{S_{\mathbf{1}}}(\mathbf{l}) \sum_{\mathbf{k}\in\N^n}e_{\mathbf{k}}\int_{(0,1)^n} e_{\mathbf{l}\mathbf{m}}(x)e_{\mathbf{k}}(x)dx\\
&=\sum_{\mathbf{l}\in\N^n} \widehat{S_{\mathbf{1}}}(\mathbf{l}) e_{\mathbf{l}\mathbf{m}}
=\sum_{\mathbf{l}\in\N^n} \tau_{\mathbf{l}} e_{\mathbf{m}}(\mathbf{l}\cdot)=Te_{\mathbf{m}}.
\end{align*}
\item[(iii)] 
By \eqref{eq:1.4}, $\tau_{\mathbf{1}}>0$.
To show the invertibility of $T$, we apply again the theory of Neumann series, cf. \cite[Theorem II.1.2]{Yos}.
For that sake, we rewrite $T$ as $T=\tau_{\mathbf{1}}(\text{Id}-M)$, where

\[
Mg(x) = - \frac{1}{\tau_{\mathbf{1}}}\sum_{\mathbf{k} \in \mathbb{N}^n, \, \mathbf{k} \neq \mathbf{1}} \tau_{2\mathbf{k}-\mathbf{1}}g((2\mathbf{k}-\mathbf{1})x), \qquad x\in (0,1)^n. 
\]
If condition \eqref{eq:1.7} holds, we obtain 
\[
\|M\| \leq \frac{1}{\tau_{\mathbf{1}}}\sum_{\mathbf{k} \in \mathbb{N}^n, \, \mathbf{k} \neq \mathbf{1}}
\left| \tau_{2\mathbf{k}-\mathbf{1}} \right| < 1
\]
and $T$ has a bounded inverse. \qedhere

\eit 
\end{proof}

\begin{theorem}\label{thm:123} Let $n \in  \{1,2,3\}$. Then the sequence $\left\{S_{\mathbf{k}}(\cdot)\right\}_{\mathbf{k}\in \mathbb{N}^n}$ is a Schauder basis in $L_q (0,1)^n$ for
any $q\in (1,\infty)$. 
\end{theorem}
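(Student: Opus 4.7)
The plan is to apply the transfer principle already used to prove Theorem \ref{Theorem 8.0.7.6}, now in the multivariate Pringsheim-convergence setting. By Proposition \ref{prop:1.3}, the tensor product system $(e_{\mathbf{m}})_{\mathbf{m}\in\N^n}$ is a Schauder basis of $L_q(0,1)^n$. By Lemma \ref{Lemma 2.2.6'}(ii), the operator $T$ defined in \eqref{Equation 8.0.1.10-2} sends $e_{\mathbf{m}}$ to $S_{\mathbf{m}}$, and by part (i) of the same lemma it is bounded and linear on $L_q(0,1)^n$. Therefore, arguing as on \cite[p.~75]{Hig}, it is enough to show that $T$ has a bounded inverse, at which point the transfer principle immediately yields that $(S_{\mathbf{m}})_{\mathbf{m}\in\N^n}$ is also a Schauder basis.

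To verify invertibility, I would check the hypothesis \eqref{eq:1.7} of Lemma \ref{Lemma 2.2.6'}(iii). Combining \eqref{eq:1.4} with Proposition \ref{prop:1.5} and factoring gives
\[
|\tau_{2\mathbf{k}-\mathbf{1}}| = \left(\frac{4\sqrt{2}}{\pi^2}\right)^{n}\prod_{i=1}^n\frac{1}{(2k_i-1)^2},\qquad |\tau_{\mathbf{1}}|=\left(\frac{4\sqrt{2}}{\pi^2}\right)^n.
\]
Hence, after splitting the multi-sum into univariate ones and using the classical identity $\sum_{k=1}^\infty(2k-1)^{-2}=\pi^2/8$, the inequality \eqref{eq:1.7} reduces to
\[
\left(\frac{\pi^2}{8}\right)^n - 1 < 1,\qquad \text{i.e.,}\qquad \left(\frac{\pi^2}{8}\right)^n < 2.
\]
A direct numerical check confirms that this holds for $n\in\{1,2,3\}$ (for instance $(\pi^2/8)^3\approx 1.878$) and fails from $n=4$ onwards (since $(\pi^2/8)^4\approx 2.317$). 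Thus $T$ is a bounded linear bijection of $L_q(0,1)^n$ onto itself in the three admissible dimensions, and the result follows.

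The main — and essentially the only — obstacle is precisely this Neumann-series diagonal-dominance condition: the crude triangle-inequality bound on $\|T-\tau_{\mathbf{1}}\mathrm{Id}\|$ controls the tail of the dilation series only when $n\le 3$. Note that one cannot cheaply rescue the argument by a sharper estimate, because the expression $(\pi^2/8)^n - 1$ already compares the \emph{modulus} of the leading Fourier coefficient with the sum of the moduli of the others; no cancellation is available at this level. This is the artifact of the proof method that is mentioned in the introduction and that motivates the abstract tensor-product approach carried out in Section \ref{sec:tensor} to remove the dimensional restriction.
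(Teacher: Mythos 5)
Your proposal is correct and follows essentially the same route as the paper: transfer the Schauder basis property from $(e_{\mathbf{m}})_{\mathbf{m}\in\N^n}$ via the operator $T$ of Lemma \ref{Lemma 2.2.6'}, and verify the Neumann-series condition \eqref{eq:1.7}, which reduces to $\left(\pi^2/8\right)^n<2$ and hence holds exactly for $n\le 3$. The numerical check and the remark that the dimensional restriction is an artifact of the triangle-inequality bound match the paper's proof and its surrounding discussion.
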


\begin{proof} 
By Lemma \ref{Lemma 2.2.6'}(iii), the operator $T$ is a
homeomorphism and the assertion follows if we can show that 
\[
\sum_{\mathbf{k} \in \mathbb{N}^n, \, \mathbf{k} \neq \mathbf{1}} \left| \tau_{2\mathbf{k}-\mathbf{1}} \right| < |\tau_{\mathbf{1}}|.
\]
From  \eqref{eq:1.4}, Proposition \ref{prop:1.5}, and the observation 
\[
\sum_{\mathbf{k} \in \mathbb{N}^n, \, \mathbf{k} \neq \mathbf{1}} \frac{1}{(2\mathbf{k}-\mathbf{1})^2}
= -1 + \left( \sum_{k=1}^{\infty} \frac{1}{(2k-1)^2} \right)^n
= \left( \frac{\pi^2}{8} \right)^n - 1, 
\]
we obtain that 
\[
\sum_{\mathbf{k} \in \mathbb{N}^n, \, \mathbf{k} \neq \mathbf{1}} |\tau_{2\mathbf{k}-\mathbf{1}}|
= \left( \frac{4\sqrt{2}}{\pi^2} \right)^n \left( \left( \frac{\pi^2}{8} \right)^n - 1 \right).
\]
This quantity is smaller than
\[
\left( \frac{4\sqrt{2}}{\pi^2} \right)^n = \tau_{\mathbf{1}}
\]
for \( n = 1,2,3 \). 
\end{proof}

\subsection{Tensor products of Schauder bases}\label{sec:tensor}

It is a very natural question if the restriction to $n\in\{1,2,3\}$ in Theorem \ref{thm:123}
is necessary or if it is only an artifact of the proof method. Furthermore,
as the tensor product of two orthonormal bases of two Hilbert spaces is an orthonormal basis
of the tensor product of these Hilbert spaces (cf. \cite[Sect. II.4]{ReSi}), it is natural to hope that a similar statement
should be true for Schauder bases and tensor products of Banach spaces. 

The aim of this section is to shed light on these questions. We refer to \cite{cheney,Defant} or \cite{ryan}
for an extensive treatment of the theory of tensor products of Banach spaces.
If $X$ and $Y$ are two Banach spaces, then $X\otimes Y$ is their algebraic tensor product, i.e., the set of all finite sums
of the form $\displaystyle \sum_{j=1}^m x_j\otimes y_j$, where $x_j\in X$ and $y_j\in Y$ for all $j=1,\dots,m.$
The space $X\otimes Y$ may be equipped with different norms, which (after possible completion) can give rise
to different tensor product Banach spaces.

\medskip

If the sequence $(\varphi_j)_j$ is a Schauder basis of $X$ and $(\eta_l)_l$ is a Schauder basis of $Y$,
we would like to know if $(\varphi_j\otimes\eta_l)_{j,l}$ is a Schauder basis for the tensor product space of $X$ and $Y$.
According to \cite{GG,Lamadrid} or \cite[Prop. 4.25]{ryan} for general $X$ and $Y$ it turns out that $(\varphi_j\otimes\eta_l)_{j,l}$ (with the square ordering) is a Schauder basis
for $X{\hat\otimes}_{\pi}Y$ and $X{\hat\otimes}_{\varepsilon}Y$, where the tensor product spaces are equipped with the projective and injective norms, respectively.
In view of the Lebesgue spaces $L_q(0,1)$ this is not satisfactory, since the projective and injective tensor products of $L_q(0,1)$ with itself
do not coincide with the standard product space $L_q(0,1)^2$ for $q\in (1,\infty)$, $q\neq 2$. 

On the other hand, it is known (cf. \cite[Cor. 1.52]{cheney}) that 
\begin{equation}\label{p-nuclear-norm}
L_q(0,1)\ {\otimes}_{\alpha_q}L_q(0,1)=L_q(0,1)^2, 
\end{equation}
where the tensor product space is equipped with  the $q$-nuclear norm $\alpha_q$, see \cite[Def. 1.45]{cheney} or \cite[Section 6.2]{ryan}, where
this norm is denoted $g_p$ and called Chevet-Saphar tensor norm.

By \cite[Prop. 6.6]{ryan}, the $q$-nuclear norm is a  \emph{uniform crossnorm} (or reasonable crossnorm in notation of \cite{cheney}, see \cite[Lem. 1.46]{cheney}), which means that for
every bounded linear operator $S:X\to X$, every bounded linear operator $T:Y\to Y$
and every pair of $n$-tuples $\{f_1,\dots,f_n\}\subset X$ and $\{g_1,\dots,g_n\}\subset Y$ the following inequality holds
\[
\alpha_q\left(\sum_{i=1}^n (Sf_i)\otimes (Tg_i)\right)\le \|S\|\cdot \|T\|\cdot\alpha_q\left(\sum_{i=1}^n f_i\otimes g_i\right).
\]
This allows to define the tensor product of $S$ and $T$ on $X\otimes Y$ as
\[
(S\otimes T)\left(\sum_{i=1}^n f_i\otimes g_i\right):=\sum_{i=1}^n (Sf_i)\otimes (Tg_i)
\]
and to extend it by continuity to all $X\otimes_{\alpha_q}Y.$ Furthermore, $\|S\otimes T\|\le \|S\|\cdot \|T\|$.

As the next piece of the puzzle, it is known (cf. \cite{GG,Lamadrid} or \cite[Theorem 18.1]{Sing}), that if the tensor product space is generated by a uniform crossnorm,
then the tensor product of two Schauder bases is a Schauder basis of the tensor product space (if we keep the square-ordering).

Therefore, the following proposition is available in the literature on tensor product spaces if one combines several notions and sources.
For the sake of completeness, we summarize these findings and show directly that the tensor product
of two Schauder bases of $L_q(0,1)$ is a Schauder basis of $L_q(0,1)^2$.

\begin{proposition}
Let $1\le q<\infty$ and let $(\varphi_j)_{j\ge1}$ and $(\eta_l)_{l\ge1}$ be Schauder bases of $L_q(0,1)$.
Enumerate the elementary tensors $\varphi_j\otimes \eta_l$ by
the square ordering and write $(u_k)_{k\ge1}$ for the resulting sequence. Then $(u_k)_{k\ge 1}$ is a Schauder basis of $L_q(0,1)^2$.
\end{proposition}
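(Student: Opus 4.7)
The plan is to use the partial-sum projections of the two univariate bases as the main tool. Let $P_N\colon L_q(0,1)\to L_q(0,1)$ be the partial-sum projection associated with $(\varphi_j)_{j\ge1}$, that is $P_Nf=\sum_{j=1}^{N}a_j(f)\varphi_j$, where $a_j$ denote the coordinate functionals, and define $Q_M$ analogously from $(\eta_l)_{l\ge1}$. Since both sequences are Schauder bases of $L_q(0,1)$, the families $(P_N)_{N\ge 1}$ and $(Q_M)_{M\ge 1}$ are uniformly bounded, say by constants $C_1$ and $C_2$. I also single out the rank-one ``slice'' operators $\pi_N^\varphi:=P_N-P_{N-1}$ and $\pi_M^\eta:=Q_M-Q_{M-1}$ (with $P_0=Q_0=0$), whose norms are then bounded by $2C_1$ and $2C_2$, respectively.

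First I would introduce the rectangular tensor-product projections $\Pi_{N,M}:=P_N\otimes Q_M$. A Fubini--Minkowski argument -- apply $P_N$ in the first variable pointwise in the second, raise the pointwise estimate to the $q$-th power, integrate in the second variable, then repeat the procedure in the other coordinate -- shows that $\Pi_{N,M}$ extends to a bounded operator on $L_q(0,1)^2$ with $\|\Pi_{N,M}\|\le C_1C_2$, uniformly in $N,M$. On the dense subspace of finite linear combinations of elementary tensors $g\otimes h$ one has $\Pi_{N,M}(g\otimes h)=P_Ng\otimes Q_Mh\to g\otimes h$ in $L_q(0,1)^2$, so the uniform bound yields $\Pi_{N,N}f\to f$ for every $f\in L_q(0,1)^2$. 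Because the first $N^2$ vectors of the square enumeration of $(u_k)$ are exactly $\{\varphi_j\otimes\eta_l:\max(j,l)\le N\}$, the partial sum $S_{N^2}f$ coincides with $\Pi_{N,N}f$, and therefore $S_{N^2}f\to f$ along this subsequence of indices.

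The main obstacle is the intermediate partial sums $S_kf$ with $N^2<k<(N+1)^2$. For such $k$ I would write $S_kf=\Pi_{N,N}f+R_kf$, where $R_k$ collects some initial segment of the $(2N+1)$ ``shell'' terms indexed by $\{(j,l):\max(j,l)=N+1\}$. Tracing through the square order, $R_kf$ is either of the form $(P_J\otimes \pi_{N+1}^\eta)f$ (while only a prefix of the column $j\mapsto(j,N+1)$ has been added), or of the form $(P_{N+1}\otimes \pi_{N+1}^\eta)f+(\pi_{N+1}^\varphi\otimes Q_L)f$ (once the column is complete and a prefix of the row $l\mapsto(N+1,l)$ has been added), or a symmetric variant depending on the chosen traversal of the shell. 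Each summand factors as a uniformly bounded ``ambient'' operator ($P_J$, $P_{N+1}$, or $Q_L$) applied to $(I\otimes\pi_{N+1}^\eta)f$ or $(\pi_{N+1}^\varphi\otimes I)f$. Now $(I\otimes Q_M)f\to f$ in $L_q(0,1)^2$ by the same Fubini argument as above (using that $(\eta_l)$ is a Schauder basis), so $(I\otimes\pi_{N+1}^\eta)f=(I\otimes Q_{N+1})f-(I\otimes Q_N)f\to 0$, and analogously $(\pi_{N+1}^\varphi\otimes I)f\to 0$. Hence $\|R_kf\|_{L_q(0,1)^2}$ is bounded uniformly in the position within the shell by a quantity that tends to $0$ with $N$, and $S_kf\to f$ for the entire square-ordered sequence.

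Uniqueness of the coefficients in any expansion $f=\sum_k c_k u_k$ then follows because the tensor products $a_j\otimes b_l$ of the biorthogonal functionals of the two univariate bases are bounded functionals on $L_q(0,1)^2$ (by the same Fubini reasoning) and are biorthogonal to the $(u_k)$; evaluating such a functional on a convergent expansion isolates each $c_k$ uniquely. The delicate step is the one just above: the square-ordered partial sums do not coincide with any rectangular projection $\Pi_{N,M}$, and it is precisely the uniform boundedness of the rank-one slices $\pi_N^\varphi$, $\pi_M^\eta$ combined with the Fubini-type tensorisation of $(P_N)$ and $(Q_M)$ that allows one to pass from the rectangular partial sums to the square-ordered ones.
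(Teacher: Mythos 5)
Your argument is correct and follows the same overall strategy as the paper's proof: tensorize the uniformly bounded partial-sum projections $P_N$, $Q_M$, establish the uniform bound $\|P_N\otimes Q_M\|\le C_1C_2$ on $L_q(0,1)^2$, obtain convergence on elementary tensors, pass to general $f$ by density, and then prove uniqueness of the coefficients. Two details differ. First, where the paper deduces $\|P_N\otimes Q_N\|\le \|P_N\|\,\|Q_N\|$ from the abstract facts that the $q$-nuclear (Chevet--Saphar) norm is a uniform crossnorm and that $L_q(0,1)\otimes_{\alpha_q}L_q(0,1)=L_q(0,1)^2$, you prove the same bound by hand via the Fubini--Minkowski computation $\int_0^1\|P_Nf(\cdot,y)\|_q^q\,dy\le C_1^q\|f\|_q^q$ applied coordinate by coordinate; this is more elementary and self-contained (one should just note that $(x,y)\mapsto(P_Nf(\cdot,y))(x)$ is measurable, which is clear since $P_N$ has finite rank with continuous coordinate functionals). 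Second---and this is a genuine improvement in completeness---you explicitly control the square-ordered partial sums $S_k$ for $N^2<k<(N+1)^2$ by decomposing the partially filled shell into operators of the form $P_J\otimes\pi_{N+1}^\eta$ and $\pi_{N+1}^\varphi\otimes Q_L$ and using that $(I\otimes\pi_{N+1}^\eta)f\to0$ and $(\pi_{N+1}^\varphi\otimes I)f\to0$, uniformly over the position within the shell; the paper's written proof only establishes convergence of the diagonal sums $(P_N\otimes Q_N)f$, i.e.\ of the subsequence $S_{N^2}f$, and leaves the intermediate indices implicit, so your shell argument closes a step the paper does not spell out. Finally, your uniqueness argument via the bounded biorthogonal functionals $a_j\otimes b_l$ is slightly different from the paper's (which applies $S_N$ to a null series and lets the truncation level of the tail tend to infinity), but both are valid.
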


\begin{proof}
Let $X=L_q(0,1)$. By the Schauder property, every $x\in X$ can be decomposed (in a unique way) as
\[
x=\sum_{j=1}^\infty a_j \varphi_j,
\]
which allows us to define the \emph{partial-sum projection} $P_N$ 
as
\[
P_Nx := \sum_{j=1}^N a_j\,\varphi_j. 
\]
Similarly, we define $Q_N$ associated to the Schauder basis $(\eta_l)_{l\ge 1}$.
The partial-sum projections are known to be uniformly bounded, i.e., they satisfy $\sup_N\|P_N\|=:C_1<\infty$ and $\sup_N\|Q_N\|=:C_2<\infty$.
This can be seen as follows: For each $x\in X$ the sequence $P_Nx$ converges to $x$ in norm,
hence $\sup_N\|P_Nx\|_q\leq \sup_N\|P_Nx-x\|_q+\|x\|_q<\infty$ for every $x\in X$. By the Uniform Boundedness Principle, this implies
$\sup_N\|P_N\|<\infty$. A similar argument holds for $Q_N$. 

Identify the algebraic tensor product $X\otimes X$ with the linear span of product functions in $L_q(0,1)^2$ via $g\otimes h\mapsto((s,t)\mapsto g(s)h(t))$.
Using the fact, see \cite[Lem. 10.15]{cheney}, that simple (finite) linear combinations of product functions are dense in 
$L_q(0,1)^2$, we obtain that
\begin{equation}\label{density-result}
\overline{L_q(0,1) \otimes L_q(0,1)}^{\|\cdot\|_{L_q(0,1)^2}} \;=\; L_q(0,1)^2, \qquad q \in [1,\infty), 
\end{equation}
where the closure is taken with respect to the norm in \(L_q(0,1)^2\).

For each $N\ge 1$ define the finite-rank operator $S_N=P_N\otimes Q_N$ on $X\otimes X$ by
\[
(P_N\otimes Q_N)\bigg(\sum_{k} g_k\otimes h_k\bigg)
:=\sum_k (P_N g_k)\otimes (Q_N h_k).
\]
These operators extend by continuity to bounded operators on $L_q(0,1)^2$ and satisfy 
\[
\|S_N\|=\|P_N\otimes Q_N\|\le\|P_N\|\|Q_N\|\le C_1C_2=:C.
\]
Observe, that $S_{N}(f)$ lies in ${\rm span}\{\varphi_j\otimes \eta_l:1\le j,l\le N\}$ for every $f\in X\otimes X$ and,
    by the continuity of the extension of $S_N$, also for every $f\in L_q(0,1)^2$. 
    Therefore, $S_{N+1}(f)$ ``extends'' $S_N(f)$ and its coefficients for $\varphi_j\otimes \eta_l$ with $1\le j,l\le N$ are the same. 
If $f=g\otimes h$ we can write
    \begin{align*}
        S_N(f)-f&=(P_N\otimes Q_N)(g\otimes h)-g\otimes h=(P_Ng)\otimes (Q_Nh)-g\otimes h\\
        &=-(g-P_Ng)\otimes (h-Q_Nh)-(P_Ng)\otimes(h-Q_Nh)-(g-P_Ng)\otimes Q_Nh
    \end{align*}
    and all three terms converge to zero (using that $P_Ng\to g$, $Q_Nh\to h$ and by the uniform boundedness of $\|P_N\|$ and $\|Q_N\|$).
    Therefore, $\lim_{N\to\infty}S_N(f)=f$. By linearity, the same is true for all $f\in X\otimes X$
    and by density, $\lim_{N\to\infty}S_N(F)=F$ follows for all $F\in L_q(0,1)^2$. 
    Indeed, for $F\in L_q(0,1)^2$, find $f\in X\otimes X$ with $\|F-f\|_q<\varepsilon.$ Then
    \[
    \limsup_{N\to\infty}\|S_N(F)-F\|_q\le \limsup_{N\to\infty}\|S_N(F)-S_N(f)\|_q+\limsup_{N\to\infty}\|S_N(f)-f\|_q+\limsup_{N\to\infty}\|f-F\|_q\le (C+1)\varepsilon.
    \]
Moreover, uniqueness follows in the following way:  Assume that
    \begin{equation}\label{eq:unique}
    \lim_{N\to\infty}\sum_{j,l\le N} a_{j,l} \varphi_j\otimes \eta_l=0.
    \end{equation}
    Then we get for every $M,N\ge 1$
    \begin{align*}
    \left\|\sum_{j,l\le N}a_{j,l} \varphi_j\otimes \eta_l\right\|_q&=
    \left\|\sum_{j,l\le N}a_{j,l} (P_N\varphi_j)\otimes (Q_N\eta_l)\right\|_q=
    \left\|S_N\left(\sum_{j,l\le M+N}a_{j,l} \varphi_j\otimes \eta_l\right)\right\|_q\\
    &\le C\left\|\sum_{j,l\le N+M}a_{j,l} \varphi_j\otimes \eta_l\right\|_q.
    \end{align*}
    Taking the limit $M\to\infty$, we obtain that 
    \[
    \left\|\sum_{j,l\le N}a_{j,l} \varphi_j\otimes \eta_l\right\|_q=0,
    \]
    i.e., that $a_{j,l}=0$ for all $1\le j,l\le N$. As this holds for every $N\ge 1$, we observe that \eqref{eq:unique}
    implies that $a_{j,l}=0$ for all $j,l\ge 1.$
\end{proof}
\begin{corollary}
The sequence $\left\{S_{\mathbf{k}}(\cdot)\right\}_{\mathbf{k}\in \mathbb{N}^n}$ (with the square-ordering) is a Schauder basis in $L_q (0,1)^n$ for
any $q\in (1,\infty)$ and any $n\ge 1$. 
\end{corollary}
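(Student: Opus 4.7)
The plan is to proceed by induction on $n$, where the inductive step invokes the preceding Proposition in a mildly generalized form. The base case $n=1$ is exactly Theorem \ref{Theorem 8.0.7.6}. For $n\ge 2$, assume the sequence $(S_{\mathbf{k}'})_{\mathbf{k}' \in \mathbb{N}^{n-1}}$ has already been established as a Schauder basis of $L_q((0,1)^{n-1})$ under the square ordering. I would then apply the Proposition with $X$ replaced by $L_q((0,1)^{n-1})$ carrying the basis $(S_{\mathbf{k}'})_{\mathbf{k}'}$, and $Y = L_q(0,1)$ carrying the basis $(S_j)_{j\in\mathbb{N}}$. Since the simple tensor $S_{\mathbf{k}'} \otimes S_j$ coincides with $S_{(\mathbf{k}', j)}$ by the product definition \eqref{eq:0.1}, the resulting Schauder basis of $L_q((0,1)^n)$ is exactly $(S_{\mathbf{k}})_{\mathbf{k} \in \mathbb{N}^n}$.

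To justify the extension, I would verify that the proof of the Proposition uses only three properties of the factors: density of simple tensors in the product space, uniform boundedness of the partial-sum projections via the Uniform Boundedness Principle, and boundedness of the tensor-product operator $P_N \otimes Q_N$ with $\|P_N \otimes Q_N\| \le \|P_N\|\|Q_N\|$. Each remains valid when one factor is $L_q((0,1)^{n-1})$ instead of $L_q(0,1)$: density of simple tensors $f(x_1,\dots,x_{n-1})\,g(x_n)$ in $L_q((0,1)^n)$ is a standard consequence of approximation by step functions on rectangles, and the tensor-product norm bound follows from the factorization $P_N \otimes Q_N = (P_N \otimes \mathrm{Id})(\mathrm{Id} \otimes Q_N)$ together with the pointwise Fubini estimate $\|(\mathrm{Id} \otimes Q_N)f\|_{L_q((0,1)^n)} \le \|Q_N\|\cdot \|f\|_{L_q((0,1)^n)}$ (apply $Q_N$ in the last variable, integrate the resulting $L_q$-inequality in the remaining variables).

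Compatibility of the square ordering across the induction is automatic. By the induction hypothesis, the $N$-th partial-sum projection of the inductive basis in $L_q((0,1)^{n-1})$ corresponds to summing over $\{\mathbf{k}' : \max_i k_i' \le N\}$, and tensoring with the $N$-th partial-sum projection of $(S_j)_{j}$ in $L_q(0,1)$ produces the sum over $\{\mathbf{k} \in \mathbb{N}^n : \max_i k_i \le N\}$, which is precisely the $N$-th square in $\mathbb{N}^n$. The main (minor) obstacle really lies only in checking the three ingredients of the generalized Proposition and in this bookkeeping of the ordering; once these are in place, the inductive step is immediate and the corollary follows.
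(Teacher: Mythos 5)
Your proposal is correct and is essentially the argument the paper intends: the corollary is stated without proof as an iteration of the two-factor Proposition, and your induction on $n$ with one factor replaced by $L_q((0,1)^{n-1})$ is exactly that iteration. Your explicit verification of the three ingredients (density of simple tensors, uniform boundedness of the partial-sum projections, and the bound $\|P_N\otimes Q_N\|\le\|P_N\|\,\|Q_N\|$ via the slice-wise Fubini estimate) supplies detail the paper leaves implicit, but does not change the route.
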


\subsection{Bases with the inner product structure}

The concept of tensor products is surely important and in some sense standard if we want to transfer results from univariate functions to functions of many variables.
It was, however, one of the main achievements of \cite{SV} to show that replacing tensor products by inner products can bring essential advantages.
Recall, that it was shown there, that the system ${\mathcal R}_n$, defined in \eqref{eq:CS_Rn_intro} as
\begin{equation}\label{eq:CS_Rn}
 {\mathcal R}_n:=\{1\}\cup\{\sqrt{3}\, \CalC_{\mathbf{k}},\sqrt{3}\,\CalS_{\mathbf{k}}: \mathbf{k}\in\Z^n, \mathbf{k}+\!\!\!>0\}
\end{equation}
is a Riesz basis of $L_2(0,1)^n$ with constants $A=1/2$ and $B=3/2$. The aim of this section is to study the behavior of \eqref{eq:CS_Rn}
outside of the scale of Hilbert spaces. 

\medskip

We use the same approach as we used in the proof of Theorem \ref{Theorem 8.0.7.6}.
First, we consider the trigonometric analogue of \eqref{eq:CS_Rn} and show in Theorem \ref{thm:sincos_inner}, that it constitutes an orthonormal basis in $L_2(0,1)^n.$
Then we derive, that it constitutes also a Schauder basis of every $L_q(0,1)^n$, $1<q<\infty.$
Afterwards we construct a bounded and invertible linear mapping $T$ of $L_q(0,1)^n$ onto itself, which maps this trigonometric basis onto \eqref{eq:CS_Rn},
similarly to Lemma \ref{Lemma 2.2.6} and Lemma \ref{Lemma 2.2.6'}.

\begin{theorem}\label{thm:sincos_inner}
The system
\begin{equation}\label{eq:sincos_inner}
\{1\}\cup\{\sqrt{2}\,\cos(2\pi \mathbf{k}\cdot x):\mathbf{k}\in\Z^n, \mathbf{k}+\!\!\!>0\}\cup\{\sqrt{2}\,\sin(2\pi \mathbf{k}\cdot x):\mathbf{k}\in\Z^n,\mathbf{k}+\!\!\!>0\}
\end{equation}
is an orthonormal basis of the real Hilbert space $L_2(0,1)^n$.
\end{theorem}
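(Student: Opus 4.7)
The plan is to reduce everything to the classical fact that the complex exponential system
\[
\mathcal{E}:=\{e^{2\pi i\mathbf{k}\cdot x}:\mathbf{k}\in\Z^n\}
\]
is an orthonormal basis of the complex Hilbert space $L_2((0,1)^n;\cC)$. This is a standard consequence of the tensor product structure: the univariate exponentials $(e^{2\pi ikt})_{k\in\Z}$ form an ONB of $L_2((0,1);\cC)$, and hence their tensor products do so in the multivariate case (cf.\ \cite[Sect.~II.4]{ReSi}); note moreover that $\prod_{j=1}^n e^{2\pi i k_j x_j}=e^{2\pi i\mathbf{k}\cdot x}$, so the tensor product basis coincides with $\mathcal{E}$.

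First I would verify orthonormality of \eqref{eq:sincos_inner} by direct computation. Using the product-to-sum identities together with the elementary identity
\[
\int_{(0,1)^n}e^{2\pi i\mathbf{m}\cdot x}dx=\prod_{j=1}^n\int_0^1 e^{2\pi im_jx_j}dx_j=\begin{cases}1,&\mathbf{m}=0,\\ 0,&\mathbf{m}\ne 0,\end{cases}
\]
every inner product $\int\cos(2\pi\mathbf{k}\cdot x)\cos(2\pi\mathbf{l}\cdot x)dx$ reduces to $\tfrac12([\mathbf{k}=\mathbf{l}]+[\mathbf{k}=-\mathbf{l}])$, while the sine-sine and sine-cosine cases are analogous. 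The condition $\mathbf{k}+\!\!\!>0$ ensures that for $\mathbf{k},\mathbf{l}$ taken from the index set, the case $\mathbf{k}=-\mathbf{l}$ is impossible unless both vanish, which is already excluded. Combined with the $\sqrt{2}$ normalisation this yields the Kronecker deltas.

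For completeness, I would take an arbitrary real $f\in L_2((0,1)^n)$, regard it as an element of $L_2((0,1)^n;\cC)$, and write its complex Fourier expansion
\[
f=\sum_{\mathbf{k}\in\Z^n}c_{\mathbf{k}}\,e^{2\pi i\mathbf{k}\cdot x},\qquad c_{\mathbf{k}}=\int_{(0,1)^n}f(x)\,e^{-2\pi i\mathbf{k}\cdot x}dx,
\]
which converges in $L_2$ because $\mathcal{E}$ is an ONB. Since $f$ is real-valued, $c_{-\mathbf{k}}=\overline{c_{\mathbf{k}}}$. Pairing the terms indexed by $\mathbf{k}$ and $-\mathbf{k}$ for $\mathbf{k}\ne 0$ with $\mathbf{k}+\!\!\!>0$ gives
\[
c_{\mathbf{k}}e^{2\pi i\mathbf{k}\cdot x}+c_{-\mathbf{k}}e^{-2\pi i\mathbf{k}\cdot x}=2\mathrm{Re}(c_{\mathbf{k}})\cos(2\pi\mathbf{k}\cdot x)-2\mathrm{Im}(c_{\mathbf{k}})\sin(2\pi\mathbf{k}\cdot x),
\]
so $f$ lies in the closed real linear span of \eqref{eq:sincos_inner}, proving completeness. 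Combined with orthonormality from the previous step, \eqref{eq:sincos_inner} is an orthonormal basis of the real Hilbert space $L_2((0,1)^n)$.

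The only mildly delicate point is bookkeeping: making sure that the condition $\mathbf{k}+\!\!\!>0$ selects exactly one representative from each pair $\{\mathbf{k},-\mathbf{k}\}$, so that the real system neither double-counts (which would destroy orthonormality) nor misses a degree of freedom (which would destroy completeness). Other than that, this is a routine reduction to the complex Fourier basis.
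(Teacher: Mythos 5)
Your proposal is correct and follows essentially the same route as the paper: both reduce to the complex exponential system $\{e^{2\pi i\mathbf{k}\cdot x}\}_{\mathbf{k}\in\Z^n}$ being an orthonormal basis of the complex $L_2(0,1)^n$, verify orthonormality of the real system from the orthogonality relations of the exponentials (the paper by separating real and imaginary parts of the complex inner products, you by the equivalent product-to-sum computation), and prove completeness by expanding a real $f$ in the complex basis, using $c_{-\mathbf{k}}=\overline{c_{\mathbf{k}}}$, and pairing $\mathbf{k}$ with $-\mathbf{k}$ over the representatives selected by $\mathbf{k}+\!\!\!>0$. No substantive differences.
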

\begin{proof}
We use that $\{e^{2\pi i\mathbf{k}\cdot x}:k\in\Z^n\}$ is an orthonormal basis of the complex Hilbert space $L_2(0,1)^n$.
First, let us consider any non-zero $\mathbf{k}\in\Z^n\setminus\{0\}$. Taking the real and imaginary part of $\langle e^{2\pi i\mathbf{k}\cdot x}, 1\rangle=0$,
we obtain that the constant function is orthogonal to the remaining elements of \eqref{eq:sincos_inner}.
Applying the same idea to the equation $\langle e^{2\pi i\mathbf{k}\cdot x}, e^{2\pi i\mathbf{k}\cdot x}\rangle=1$,
we get that $\cos(2\pi \mathbf{k}\cdot x)$ and $\sin(2\pi \mathbf{k}\cdot x)$ are orthogonal for every $\mathbf{k}\in\Z^n\setminus\{0\}$
and, by symmetry, that $\sqrt{2}$ is the correct normalization factor in \eqref{eq:sincos_inner}.
Similarly, for non-zero
$\mathbf{k},\mathbf{l}\in\Z^n\setminus\{0\}$ with $\mathbf{k}\not=\mathbf{l}$,
we separate the real and imaginary part of $\langle e^{2\pi i\mathbf{k}\cdot x}, e^{2\pi i\mathbf{l}\cdot x}\rangle=\langle e^{2\pi i\mathbf{k}\cdot x}, e^{-2\pi i\mathbf{l}\cdot x}\rangle=0$.
We obtain a system of four (real) equations, which shows that all the terms in \eqref{eq:sincos_inner} are orthogonal to each other.

\medskip

To show that \eqref{eq:sincos_inner} is also a basis, we take a real function $f\in L_2(0,1)^n$ and consider its unique decomposition
\begin{equation}\label{eq:f_Four}
f=\sum_{\mathbf{k}\in\Z^n}\alpha_{\mathbf{k}}e^{2\pi i\mathbf{k}\cdot x}.
\end{equation}
Then the conjugate of $f$ has the Fourier series
\begin{equation}\label{eq:f_Four'}
\bar f=\sum_{\mathbf{k}\in\Z^n}\bar{\alpha}_{\mathbf{k}}e^{-2\pi i\mathbf{k}\cdot x}.
\end{equation}
As $f=\bar f$, we get $\alpha_{-\mathbf{k}}=\bar\alpha_{\mathbf{k}}$ by comparing \eqref{eq:f_Four} and \eqref{eq:f_Four'}. That allows us to write
\begin{align*}
  f&=\alpha_0+\sum_{\mathbf{k}+\!\!>0}\alpha_{\mathbf{k}}e^{2\pi i\mathbf{k}\cdot x}+\sum_{\mathbf{k}+\!\!>0}\alpha_{\mathbf{-k}}e^{-2\pi i\mathbf{k}\cdot x}\\
  &=\alpha_0+\sum_{\mathbf{k}+\!\!>0}\cos(2\pi\mathbf{k}\cdot x)\cdot [\alpha_{\mathbf{k}}+\bar\alpha_{\mathbf{k}}]
  +\sin(2\pi\mathbf{k}\cdot x)[i\alpha_{\mathbf{k}}+\overline{i\alpha_{\mathbf{k}}}],
\end{align*}
which is a decomposition of $f$ into \eqref{eq:sincos_inner} with real coefficients.
\end{proof}

\begin{proposition}\label{prop:sincos}
Let $n\ge 1$ and $1<q<\infty$. Then the system \eqref{eq:sincos_inner} is a Schauder basis of the Lebesgue space $L_q(0,1)^n$.
\end{proposition}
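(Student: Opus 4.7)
The plan is to reduce the statement to the Schauder basis property of the full complex exponential system $\{e^{2\pi i \mathbf{k}\cdot x}:\mathbf{k}\in\Z^n\}$ in the cubic-shell ordering on the complex Lebesgue space $L_q((0,1)^n)$, and then to pass from complex exponentials to the real system \eqref{eq:sincos_inner} by the same pairing $\mathbf{k}\leftrightarrow -\mathbf{k}$ already used in the proof of Theorem \ref{thm:sincos_inner}.

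\textbf{Complex exponential basis.} First, I would verify that for every $1<q<\infty$ the cubic partial sum operator
\[
S_N f := \sum_{\mathbf{k}\in\Z^n,\ \|\mathbf{k}\|_\infty\le N} \hat{f}(\mathbf{k})\,e^{2\pi i\mathbf{k}\cdot x}
\]
is uniformly bounded on the complex space $L_q((0,1)^n)$. The key observation is that $S_N$ factors as the composition of the $n$ one-dimensional partial sum projections $P_N^{(j)}$ acting on the variable $x_j$ alone. Each $P_N^{(j)}$ is uniformly bounded on $L_q((0,1))$ by the classical M.~Riesz theorem, and by Fubini any bounded operator on $L_q((0,1))$ lifts to a bounded operator of the same norm on $L_q((0,1)^n)$ acting in one coordinate. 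Composing $n$ such operators gives $\sup_N\|S_N\|_{L_q\to L_q}<\infty$, and since $S_N f\to f$ on the dense subspace of trigonometric polynomials, a Banach--Steinhaus argument yields $S_N f\to f$ in $L_q$ for every $f$.

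\textbf{Passing to real cosines and sines.} For a real-valued $f\in L_q((0,1)^n)$ the complex Fourier coefficients satisfy $\hat{f}(-\mathbf{k})=\overline{\hat{f}(\mathbf{k})}$, so $S_Nf$ is real-valued and symmetric under $\mathbf{k}\mapsto-\mathbf{k}$. Regrouping exactly as in the proof of Theorem \ref{thm:sincos_inner} gives
\[
S_N f = \hat{f}(0)+\sum_{\substack{\mathbf{k}+\!\!\!>0\\ \|\mathbf{k}\|_\infty\le N}}\bigl(a_{\mathbf{k}}\sqrt{2}\cos(2\pi\mathbf{k}\cdot x)+b_{\mathbf{k}}\sqrt{2}\sin(2\pi\mathbf{k}\cdot x)\bigr),
\]
with unique real coefficients $a_{\mathbf{k}},b_{\mathbf{k}}$ depending linearly on $\hat{f}(\pm \mathbf{k})$. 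Ordering the elements of \eqref{eq:sincos_inner} by shells $\|\mathbf{k}\|_\infty=1,2,\dots$ (with any fixed ordering inside each shell), the convergence $S_N f\to f$ from the previous step transfers directly to this cos/sin expansion. For uniqueness, a cos/sin series converging to $0$ in the cubic-shell order corresponds to a complex-exponential series with coefficients $(a_{\mathbf{k}}\mp ib_{\mathbf{k}})/\sqrt{2}$ attached to $e^{\pm 2\pi i\mathbf{k}\cdot x}$ converging to $0$ in the corresponding cubic order, and the Schauder basis property of the complex exponentials forces $a_{\mathbf{k}}=b_{\mathbf{k}}=0$ for every admissible $\mathbf{k}$.

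\textbf{Main obstacle.} The heart of the argument lies in the first step, namely the uniform $L_q$-boundedness of the multivariate cubic partial sum operator. This is classical but genuinely non-trivial for $q\neq 2$: the one-variable input is Riesz's conjugate function theorem, and the multivariate step is a tensorisation that preserves the one-dimensional bound in each coordinate via Fubini. Once this is in place, the remaining steps are essentially the pairing $\mathbf{k}\leftrightarrow -\mathbf{k}$ already performed in Theorem \ref{thm:sincos_inner}, together with the observation that cubic partial sums of real-valued functions remain real-valued.
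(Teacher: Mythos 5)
Your proof is correct and follows essentially the same route as the paper: the paper simply invokes Weisz's theorem on $L_q$-convergence of rectangular partial sums of multivariate Fourier series and then performs the same $\mathbf{k}\leftrightarrow-\mathbf{k}$ regrouping into cosines and sines, whereas you re-derive the partial-sum bound by tensorizing the one-dimensional M.~Riesz theorem coordinate by coordinate. One small point: the paper's notion of multivariate Schauder basis (Remark~\ref{def:Schauder_n}) requires convergence of all rectangular partial sums in the Pringsheim sense, not only the cubic ones with $\|\mathbf{k}\|_\infty\le N$, but your factorization argument already gives a uniform bound for $P_{N_1}^{(1)}\cdots P_{N_n}^{(n)}$ over arbitrary tuples $(N_1,\dots,N_n)$, so the same density argument yields Pringsheim convergence with no extra work.
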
 
\begin{proof}
This follows directly from the properties of multivariate trigonometric series \cite[Theorem 4.1]{Weisz}.
Indeed, this result shows that if $\mathbf{k}\in\N^n$ and 
\begin{equation}\label{eq:partialF}
(S^{\mathbf{k}} f)(x)=\sum_{|\ell_1|\le k_1}\dots \sum_{|\ell_n|\le k_n}\hat f(\boldsymbol{\ell}) e^{i\boldsymbol{\ell}\cdot x}
\end{equation}
denotes the rectangular partial sum of the Fourier series of $f\in L_1(\T^n)$,
then
\[
\lim_{\mathbf{k}\to\infty} S^{\mathbf{k}}f=f
\]
in the $L_q$-norm for every $1<q<\infty$. The convergence $\mathbf{k}\to\infty$ means that $\min(k_1,\dots,k_n)\to\infty$,
i.e., the convergence in the Pringsheim sense. By splitting the complex exponential, we may rewrite \eqref{eq:partialF} as
\begin{align*}
(S^{\mathbf{k}} f)(x)&=\sum_{|\ell_1|\le k_1}\dots \sum_{|\ell_n|\le k_n}\hat f(\boldsymbol{\ell}) e^{i\boldsymbol{\ell}\cdot x}\\
&=\hat f(0)+\mkern-30mu\sum_{\substack{\boldsymbol{\ell} +\!\!>0\\|\ell_1|\le k_1,\dots,|\ell_n|\le k_n}} \mkern-30mu[\alpha_{\boldsymbol{\ell}}\cos(\boldsymbol{\ell}\cdot x)+\beta_{\boldsymbol{\ell}} \sin(\boldsymbol{\ell}\cdot x)],
\end{align*}
where $\alpha_{\boldsymbol{\ell}}=\hat f(\boldsymbol{\ell})+\hat f(-\boldsymbol{\ell})$ and $\beta_{\boldsymbol{\ell}}=i[\hat f({\boldsymbol{\ell}})-\hat f(-\boldsymbol{\ell})].$
\end{proof}

\begin{theorem}\label{thm:Rn}
Let $n\ge 1$ and $q\in(1,\infty).$ Then the system ${\mathcal R}_n$ is a Schauder basis of $L_q(0,1)^n$.
\end{theorem}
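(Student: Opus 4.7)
The plan is to mimic the strategy used for Theorem \ref{Theorem 8.0.7.6}: construct a bounded invertible linear operator $T\colon L_q(0,1)^n\to L_q(0,1)^n$ that carries the trigonometric basis \eqref{eq:sincos_inner} (a Schauder basis of $L_q(0,1)^n$ by Proposition \ref{prop:sincos}) onto $\mathcal{R}_n$, after which the transfer principle recalled before Lemma \ref{Lemma 2.2.6} immediately gives the theorem. A preparatory computation yields the Fourier expansions
\[
\CalC(t)=\frac{8}{\pi^2}\sum_{m\text{ odd},\,m\ge 1}\frac{\cos(2\pi mt)}{m^2},\qquad \CalS(t)=\frac{8}{\pi^2}\sum_{m\text{ odd},\,m\ge 1}\frac{\sigma_m\sin(2\pi mt)}{m^2},
\]
with $\sigma_m:=(-1)^{(m-1)/2}$; these can be obtained directly by integration, or deduced from the identities $\CalS(x)=S(2x)$, $\CalC(x)=S(2x+1/2)$ combined with Proposition \ref{prop:1.5}. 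Substituting $t=\mathbf{k}\cdot x$ gives the analogous formulas for $\CalC_{\mathbf{k}}$ and $\CalS_{\mathbf{k}}$.

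The main new obstacle compared with Lemma \ref{Lemma 2.2.6'} is that the sign pattern $\sigma_m$ precludes a pure-dilation operator $f\mapsto\sum_m c_m f(mx)$ from mapping $\cos(2\pi\mathbf{k}\cdot x)\mapsto\CalC_{\mathbf{k}}$ and $\sin(2\pi\mathbf{k}\cdot x)\mapsto\CalS_{\mathbf{k}}$ simultaneously: such an operator preserves the cos/sin character and cannot manufacture the alternating signs distinguishing $\CalS$ from $\CalC$. My way around this is to also allow reflected dilations $f\mapsto f(-mx)$, which flip sines but leave cosines invariant. Setting $\Lambda:=\{\ell\in\Z:\ell\equiv 1\pmod 4\}$ and viewing $f$ as $\Z^n$-periodically extended to $\R^n$, I would define
\[
Tf(x):=\frac{4\sqrt{6}}{\pi^2}\sum_{\ell\in\Lambda}\frac{f(\ell x)}{\ell^2}.
\]
The key observation is that every positive odd integer $m$ appears exactly once among $\{|\ell|:\ell\in\Lambda\}$ (as $\ell=m$ if $m\equiv 1\pmod 4$, and as $\ell=-m$ if $m\equiv 3\pmod 4$), which gives $\sum_{\ell\in\Lambda}|\ell|^{-2}=\pi^2/8$ and, using $\cos(-\theta)=\cos\theta$ and $\sin(-\theta)=-\sin\theta$, leads after a short calculation to $T(\sqrt{2}\cos(2\pi\mathbf{k}\cdot x))=\sqrt{3}\,\CalC_{\mathbf{k}}$ and $T(\sqrt{2}\sin(2\pi\mathbf{k}\cdot x))=\sqrt{3}\,\CalS_{\mathbf{k}}$ for all $\mathbf{k}\ne 0$, together with $T(1)=\sqrt{3/2}$. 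The constant mismatch is harmless: rescaling a single basis element by a non-zero scalar does not affect the Schauder property; alternatively one may add the rank-one correction $(1-\sqrt{3/2})\int_{(0,1)^n}f$ to restore $T(1)=1$ without changing the action on the cosines and sines.

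The remaining properties of $T$ are handled exactly as in Lemma \ref{Lemma 2.2.6'}. For boundedness, each dilation $f\mapsto f(\ell\,\cdot)$ is an isometry of $L_q(0,1)^n$ (the case $\ell>0$ is \eqref{eq-isometry'}; for $\ell<0$ one further composes with the $L_q$-isometric reflection), and the triangle inequality combined with $\sum_{\ell\in\Lambda}|\ell|^{-2}=\pi^2/8$ gives $\|T\|\le\sqrt{6}/2$. For invertibility, I isolate the leading $\ell=1$ contribution and write $T=\frac{4\sqrt{6}}{\pi^2}\mathrm{Id}+R$; then $\|R\|\le\frac{4\sqrt{6}}{\pi^2}(\pi^2/8-1)$, which is strictly smaller than $\frac{4\sqrt{6}}{\pi^2}$, so a Neumann series argument furnishes a bounded inverse. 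Combining this with the transfer principle concludes the proof, and, crucially, the whole argument is completely independent of the dimension $n$, in sharp contrast to Theorem \ref{thm:123}.
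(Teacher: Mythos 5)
Your proposal is correct and takes essentially the same route as the paper: the paper's operator is literally yours, with the reflected dilations written as $f\bigl((4j+3)(\mathbf{1}-x)\bigr)$, which equals your $f(-(4j+3)x)$ by $\Z^n$-periodicity, and with the normalization chosen so that $T(1)=1$ and $T(\cos(2\pi\mathbf{k}\cdot x))=\CalC_{\mathbf{k}}$, which sidesteps your constant-function adjustment. The boundedness and Neumann-series invertibility arguments coincide as well.
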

\begin{proof}
We apply again the argument of \cite[p. 75]{Hig}, sketched and applied already in Section \ref{sec:bases_leb}.
Let us denote $c_j(t):=\cos(2\pi jt)$ and $s_j(t):=\sin(2\pi jt)$ for $j\ge 1.$
Furthermore, for $\mathbf{k}\in\Z^n$, we put $c_{\mathbf{k}}(x):=\cos(2\pi \mathbf{k}\cdot x)$ and $s_{\mathbf{k}}(x):=\sin(2\pi \mathbf{k}\cdot x)$.
By Proposition \ref{prop:sincos}, the system $\{1\}\cup\{c_{\mathbf{k}},s_{\mathbf{k}}:\mathbf{k}\in\Z^n, \mathbf{k}+\!\!\!>0 \}$
is the Schauder basis of $L_q(0,1)^n$.

We use the Fourier decomposition of $\CalC$ and $\CalS$
\begin{equation}\label{eq:CS_Fourier}
\CalC=\frac{8}{\pi^2}\sum_{m=0}^\infty \frac{1}{(2m+1)^2} c_{2m+1}\quad \text{and}\quad
\CalS=\frac{8}{\pi^2}\sum_{m=0}^\infty \frac{(-1)^m}{(2m+1)^2} s_{2m+1}.
\end{equation}
For $f\in L_q(0,1)^n$ defined on $[0,1)^n$, we denote by $f$ also its 1-periodic extension onto $\R^n.$
We now define a bounded linear and invertible operator $T:L_q(0,1)^n\to L_q(0,1)^n$ in such a way that it maps \eqref{eq:sincos_inner} onto \eqref{eq:CS_Rn}.
For that sake, we put 
\begin{equation}\label{eq:defT}
T(f)(x)=\frac{8}{\pi^2}\sum_{j=0}^\infty \frac{f((4j+1)x)}{(4j+1)^2}+\frac{8}{\pi^2}\sum_{j=0}^\infty \frac{f((4j+3)(\mathbf{1}-x))}{(4j+3)^2}.
\end{equation}
With this definition, we obtain
\[
T(1)=\frac{8}{\pi^2} \sum_{j=0}^\infty \frac{1}{(2j+1)^2}=1
\]
and, using \eqref{eq:CS_Fourier}, also
\begin{align*}
T(c_{\mathbf{k}})(x)&=\frac{8}{\pi^2}\sum_{j=0}^\infty \frac{c_{\mathbf{k}}((4j+1)x)}{(4j+1)^2}+\frac{8}{\pi^2}\sum_{j=0}^\infty \frac{c_{\mathbf{k}}((4j+3)(\mathbf{1}-x))}{(4j+3)^2}\\
&=\frac{8}{\pi^2}\sum_{j=0}^\infty \frac{c_{4j+1}(\mathbf{k}\cdot x)}{(4j+1)^2}+\frac{8}{\pi^2}\sum_{j=0}^\infty \frac{c_{4j+3}(\mathbf{k}\cdot(\mathbf{1}-x))}{(4j+3)^2}\\
&=\frac{8}{\pi^2}\sum_{j=0}^\infty \frac{c_{4j+1}(\mathbf{k}\cdot x)}{(4j+1)^2}+\frac{8}{\pi^2}\sum_{j=0}^\infty \frac{c_{4j+3}(\mathbf{k}\cdot x)}{(4j+3)^2}=\CalC(\mathbf{k}\cdot x)=\CalC_{\mathbf{k}}(x).
\end{align*}
The same calculation with $s_{\mathbf{k}}$ instead of $c_{\mathbf{k}}$ and with $s_{4j+3}(\mathbf{k}\cdot(\mathbf{1}-x))=-s_{4j+3}(\mathbf{k}\cdot x)$ gives
$T(s_{\mathbf{k}})(x)=\CalS(\mathbf{k}\cdot x)=\CalS_{\mathbf{k}}(x).$

The boundedness and invertibility of $T$ follows similarly as in Lemma \ref{Lemma 2.2.6}, using the technique of Neumann series.
Indeed,  we observe that the first summand in \eqref{eq:defT} is equal to $\displaystyle \frac{8}{\pi^2}f(x)=\frac{8}{\pi^2}\,\text{Id}(f)(x)$.
We therefore rewrite \eqref{eq:defT} as $\displaystyle \frac{8}{\pi^2}T=\text{Id}-M$, where
\begin{equation}\label{eq:Tnorm}
\|M\|=\left\|\frac{\pi^2}{8}T-\text{Id}\right\|\le \sum_{j=1}^\infty\frac{1}{(2j+1)^2}=\frac{\pi^2}{8}-1<1.
\end{equation}
We conclude, that $\text{Id}-M$ is invertible and, therefore, also $T$ is invertible. 
Finally, the result follows using the fact that \eqref{eq:sincos_inner} is a Schauder basis of $L_q(0,1)^n.$
\end{proof}
Again, applying the same method to $q=2$ leads to an improvement of the lower Riesz constant of ${\mathcal R}_d$.
\begin{theorem}\label{thm:Riesz_n}
 Let $n\ge 1$. Then ${\mathcal R}_n$ is a Riesz basis of $L_2(0,1)^n$ with Riesz constants $A=0.5787...$ and $B=3/2.$
\end{theorem}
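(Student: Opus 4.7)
The plan is to transplant the proof of Theorem \ref{thm:Riesz_1} into the multivariate setting, replacing the univariate operator $T$ from \eqref{Equation 8.0.1.10} with the multivariate operator $T$ from \eqref{eq:defT}, and using the orthonormal basis \eqref{eq:sincos_inner} provided by Theorem \ref{thm:sincos_inner} as the starting point. Recall the abstract principle exploited in Theorem \ref{thm:Riesz_1}: if a bounded and boundedly invertible operator $R$ on a Hilbert space $H$ sends an orthonormal basis to a sequence $(\psi_j)$, then $(\psi_j)$ is a Riesz basis of $H$ with constants $A=1/\|R^{-1}\|^2$ and $B=\|R\|^2$. It therefore suffices to produce an $R$ sending \eqref{eq:sincos_inner} onto $\mathcal{R}_n$ and to estimate its norms.

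The construction of $R$ is the only point where the multivariate case differs non-trivially from the univariate one. The operator $T$ of \eqref{eq:defT} already satisfies $T(1)=1$, $T(c_{\mathbf{k}})=\CalC_{\mathbf{k}}$ and $T(s_{\mathbf{k}})=\CalS_{\mathbf{k}}$, but the normalizations in \eqref{eq:sincos_inner} and $\mathcal{R}_n$ differ by a factor of $\sqrt{3/2}$ on the non-constant elements while they coincide on the constant. Since $T$ preserves the orthogonal decomposition $L_2(0,1)^n=H_0\oplus H_\perp$ into constants and zero-mean functions, I would define
\[
R:=P_0+\sqrt{\tfrac{3}{2}}\,(T-P_0),
\]
where $P_0$ is the orthogonal projection onto $H_0$. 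A direct check gives $R(1)=1$, $R(\sqrt{2}\,c_{\mathbf{k}})=\sqrt{3}\,\CalC_{\mathbf{k}}$ and $R(\sqrt{2}\,s_{\mathbf{k}})=\sqrt{3}\,\CalS_{\mathbf{k}}$, i.e., $R$ sends \eqref{eq:sincos_inner} onto $\mathcal{R}_n$.

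The estimates of $\|R\|$ and $\|R^{-1}\|$ follow from the Neumann-series bound $\|M\|\le \pi^2/8-1$ already established in \eqref{eq:Tnorm}. Writing $\frac{8}{\pi^2}T=\mathrm{Id}-M$ gives $\|T\|\le 1$ and $\|T^{-1}\|\le \pi^2/(16-\pi^2)$ on $L_2(0,1)^n$, and the same inequalities hold for the restriction $T|_{H_\perp}$. Since $R$ acts as the identity on $H_0$ and as $\sqrt{3/2}\,T|_{H_\perp}$ on $H_\perp$, one immediately obtains $\|R\|\le\sqrt{3/2}$, hence $B\le 3/2$, together with $\|R^{-1}\|\le\sqrt{2/3}\,\cdot\,\pi^2/(16-\pi^2)$. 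A short algebraic rearrangement then identifies the lower Riesz constant as
\[
A=\frac{1}{\|R^{-1}\|^2}\ge \frac{3(16-\pi^2)^2}{2\pi^4}=\left\{\frac{4\sqrt{6}}{\pi^2}\left(2-\frac{\pi^2}{8}\right)\right\}^2=0.5787\dots,
\]
matching the univariate constant from Theorem \ref{thm:Riesz_1}.

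The main obstacle — and the only place where care is needed — is the bookkeeping forced by the fact that the constant $1$ appears with identical normalization in both bases while the trigonometric elements do not. This mismatch is what dictates the orthogonal splitting $L_2(0,1)^n=H_0\oplus H_\perp$ and the hybrid definition of $R$ above; without it, a pure rescaling of $T$ would either alter the constant or fail to hit the correct factor $\sqrt{3}$ on the oscillatory modes. Everything else is a verbatim repetition of the one-dimensional Neumann-series computation, and the crucial bound $\|M\|\le \pi^2/8-1$ is independent of $n$, so the resulting Riesz constants are dimension-free — exactly as in \cite{SV}, but with the improved lower constant.
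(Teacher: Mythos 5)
Your proposal is correct and follows essentially the same route as the paper: the paper likewise discards the constant function (your block operator $P_0+\sqrt{3/2}\,(T-P_0)$ is just its $R:=\sqrt{3}/\sqrt{2}\cdot T$ on the zero-mean complement, extended by the identity on constants) and runs the identical Neumann-series estimate with $\|M\|\le\sqrt{3/2}\,\tfrac{8}{\pi^2}(\tfrac{\pi^2}{8}-1)$ to get the dimension-free constants. The only nitpick is that $\|T\|\le 1$ comes from the triangle inequality applied directly to \eqref{eq:defT} (summing $\tfrac{8}{\pi^2}\sum_{j\ge0}(2j+1)^{-2}=1$ over the isometries), not from the decomposition $\tfrac{8}{\pi^2}T=\mathrm{Id}-M$, which by itself would only give $\|T\|\le\pi^4/64$.
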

\begin{proof}
The proof is based on the same idea as the proof of Theorem \ref{thm:Riesz_1}. As the constant function is in both \eqref{eq:sincos_inner} and ${\mathcal R}_n$
and is orthogonal to the rest of these two systems, we may leave out the constant function and only work on its complement in $L_2(0,1)^n.$
On this complement, we define $R:=\sqrt{3}/\sqrt{2}\, \cdot T.$ Then $B:=\|R\|^2\, \le \, 3/2$ since 
\[
\|T\|\le \frac{8}{\pi^2}\,\sum_{j=0}^\infty\frac{1}{(2j+1)^2}=1.
\]

To estimate the lower Riesz constant $A$, we rewrite again $R$ as $R=a\, \mathrm{Id}-M$, where
\[
a=\frac{\sqrt{3}}{\sqrt{2}}\cdot\frac{8}{\pi^2}\quad\text{and}\quad
-Mf(x)=\frac{\sqrt{3}}{\sqrt{2}}\cdot \frac{8}{\pi^2}\Biggl(\sum_{j=1}^\infty \frac{f((4j+1)x)}{(4j+1)^2}+\sum_{j=0}^\infty \frac{f((4j+3)(\mathbf{1}-x))}{(4j+3)^2}\Biggr).
\]
Then
\begin{align*}
    \|M\|\le \frac{\sqrt{3}}{\sqrt{2}}\cdot \frac{8}{\pi^2}\Bigl(\frac{\pi^2}{8}-1\Bigr)<\frac{\sqrt{3}}{\sqrt{2}}\cdot \frac{8}{\pi^2}=a
\end{align*}
and we obtain (exactly as in the proof of Theorem \ref{thm:Riesz_1})
\[
A\ge (a-\|M\|)^2\geq \left\{\frac{4\sqrt{6}}{\pi^2}\left(2-\frac{\pi^2}{8}\right)\right\}^2=0.5787\dots.
\]
\end{proof}


\begin{thebibliography}{99}                                                                                              %
\bibitem{AlbiacKalton} F. Albiac and N. Kalton, \textit{Topics in Banach space theory}, Grad. Texts in Math. 233, Springer, New York, 2006.
\bibitem{Lang17} \"O. Bak\c{s}i, P. Gurka, J. Lang, and O. M\'endez, Basis properties of Lindqvist-Peetre functions on $L^r(0,1)^n$, \textit{Rev. Mat. Complut.}, 30:1--12, 2017.
\bibitem{Binding} P. Binding, L. Boulton, J. Čepička, P. Drábek, and P. Girg, Basis properties of eigenfunctions of the \( p \)-Laplacian, \textit{Proc. Amer. Math. Soc.}, 134(12):3487--3494, 2006.
\bibitem{cheney} E.W. Cheney and W.A. Light, \textit{Approximation theory in tensor product spaces}, Lecture notes in mathematics 1169, Springer, Berlin,  1985.
\bibitem{DDFHP} I. Daubechies, R. DeVore, S. Foucart, B. Hanin, G. Petrova, Nonlinear approximation and (deep) ReLU networks, \textit{Constr. Approx.}, 55:127--172, 2022.\bibitem{Defant} A. Defant and K. Floret, \textit{Tensor norms and operator ideals}, North-Holland Math. Stud. 176, North-Holland Publishing Co., Amsterdam, 1993.
\bibitem{Edmunds1} D. E. Edmunds, P. Gurka, and J. Lang, Properties of generalized trigonometric functions, \textit{J. Approx. Theory}, 164(1):47--56, 2012.
\bibitem{Edmunds2} D. E. Edmunds, P. Gurka, and J. Lang, Basis properties of generalized trigonometric functions, \textit{J. Math. Anal. Appl.}, 420(2):1680--1692, 2014.
\bibitem{EL11} D. E. Edmunds  and J. Lang, \textit{Eigenvalues, embeddings and  generalized trigonometric functions}, Lecture notes in mathematics 2016, Springer, Heidelberg, 2011.
\bibitem{EPGB} D. Elbr\"achter, D. Perekrestenko, P. Grohs, and H. B\"olcskei, Deep neural network approximation theory, \textit{IEEE Trans. Inform. Theory}, 67(5): 2581--2623, 2021.
\bibitem{Fabian} M. Fabian, P. Habala, P. H\'ajek, V. Montesinos, and V. Zizler, \textit{Banach space theory. The basis for linear and nonlinear analysis},
CMS Books Math./Ouvrages Math. SMC, Springer, New York, 2011.
\bibitem{Feff1} C. Fefferman, The multiplier problem for the ball, \textit{Ann. of Math.}, 94:330--336, 1971.
\bibitem{Feff2} C. Fefferman, On the divergence of multiple Fourier series, \textit{Bull. Amer. Math. Soc.} 77:191--195, 1971.
\bibitem{GG} B.R. Gelbaum and J. Gil de Lamadrid, Bases of tensor products of Banach spaces, \textit{Pacific J. Math.}, 11:1281--1286, 1961.
\bibitem{Lamadrid} J. Gil de Lamadrid, Uniform cross norms and tensor products of Banach algebras, \textit{Bull. Amer. Math. Soc.}, 69.6:797--803, 1963.
\bibitem {GK} I. Gohberg, and M. Kre\u{\i}n, \textit{Introduction to the theory of linear nonselfadjoint operators,}
Translations of Mathematical Monographs, Vol. 18, American Mathematical Society, 1969.
\bibitem{Grafakos} L. Grafakos, \textit{Classical and Modern Fourier Analysis}, Pearson Education, New Jersey, 2004.
\bibitem{HW} G.H. Hardy and E.M. Wright, \textit{An introduction to the theory of numbers}, Fifth edition, The Clarendon Press, Oxford University Press, New York, 1979.
\bibitem{Hig} J.R. Higgins, \textit{Completeness and basis properties of sets of special functions, }Cambridge University Press, Cambridge, 1977.
\bibitem{LT} J. Lindenstrauss and L. Tzafriri, \textit{Classical Banach spaces, I. Sequence spaces}, Results in Mathematics and Related Areas 92, Springer-Verlag, Berlin-New York, 1977.
\bibitem{Lindqvist} P. Lindqvist, Some remarkable sine and cosine functions, \textit{Ricerche Mat.}, 44(2):269--290, 1995.
\bibitem{LindqvistPeetre} P. Lindqvist and J. Peetre, \( p \)-arclength of the \( q \)-circle, \textit{Math. Student}, 72(1-4):139--145, 2003.
\bibitem {ryan} R.A. Ryan, \textit{Introduction to tensor products of Banach spaces}, Monographs in Mathematics, Springer, London, 2002.
\bibitem {ReSi} M. Reed and B. Simon, \textit{Methods of modern mathematical physics I: Functional Analysis}, Academic Press, London, 1980.
\bibitem{Riesz} M. Riesz. Sur la sommation des s\'eries de Fourier, \textit{Acta Sci. Math. (Szeged)}, 1:104–113, 1923.
\bibitem{Schatten} R. Schatten, \textit{A theory of cross spaces}, Annals of Mathematics Studies No. 26, Princeton Univ. Press, Princeton, N. J., 1950.
\bibitem{SV} C. Schneider and J. Vyb{\'\i}ral, A multivariate Riesz basis of ReLU neural networks, \textit{Appl. Comput. Harmonic Anal.}, 68:101605, 2024.
\bibitem{SUV} C. Schneider, M. Ullrich, and J. Vyb{\'\i}ral, Nonlocal techniques for the analysis of deep ReLU neural network approximations, submitted, available at {\tt arXiv:2504.04847}.
\bibitem {Sing} I. Singer, \textit{Bases in Banach spaces I, }Springer-Verlag, Berlin-G\"{o}ttingen-Heidelberg, 1970.
\bibitem{Telgarsky} M. Telgarsky, Representation benefits of deep feedforward networks, available at {\tt arXiv:1509.08101}.
\bibitem{Weisz} F. Weisz, Summability of multi-dimensional trigonometric Fourier series, \textit{Surv. Approx. Theory}, 7:1--179, 2012.
\bibitem{Woj} P. Wojtaszczyk, \textit{Banach spaces for analysts}, Cambridge Stud. Adv. Math. 25, Cambridge University Press, Cambridge, 1991
\bibitem{Yos} K. Yosida, \textit{Functional analysis}, Classics in Mathematics, Springer-Verlag, 1995.
\bibitem{Zyg} A. Zygmund, \textit{Trigonometric Series}, Cambridge Press, London, 3rd edition, 2002.

	
	

\end{thebibliography}
\end{document}